\documentclass[12pt]{amsart}

\usepackage[margin=1.25in]{geometry} 

\usepackage{xcolor} 
\usepackage{amsthm,,amssymb}
\usepackage{enumerate,enumitem,nicefrac,listings,longtable,caption,makecell}
\usepackage[pdfencoding=auto,psdextra,colorlinks,linkcolor=blue,citecolor=blue]{hyperref}
\usepackage[pagewise]{lineno}\nolinenumbers 

\newtheorem{theorem}{Theorem}[section]
\theoremstyle{plain}
\newtheorem{lemma}[theorem]{Lemma}
\newtheorem{proposition}[theorem]{Proposition}

\theoremstyle{definition}
\newtheorem{remark}[theorem]{Remark}

\newcommand{\q}[1]{\overline{#1}}
\newcommand{\mc}[1]{\mathcal{#1}}
\newcommand{\mbf}[1]{\mathbf{#1}}
\newcommand{\mr}[1]{\mathrm{#1}}

\newcommand{\nrm}{\trianglelefteq}
\newcommand{\Zen}[1]{\mathbf{Z}(#1)} 
\newcommand{\Cen}[2]{\mathbf{C}_{#1}(#2)} 
\newcommand{\Norm}[2]{\mathbf{N}_{#1}(#2)} 

\renewcommand{\sl}[2]{\mathrm{SL}_{#1}(#2)} 
\newcommand{\pgl}[2]{\mathrm{PGL}_{#1}(#2)} 
\newcommand{\psl}[2]{\mathrm{PSL}_{#1}(#2)} 
\newcommand{\su}[2]{\mathrm{SU}_{#1}(#2)}   
\newcommand{\psu}[2]{\mathrm{PSU}_{#1}(#2)} 
\renewcommand{\sp}[2]{\mathrm{Sp}_{#1}(#2)} 
\newcommand{\psp}[2]{\mathrm{PSp}_{#1}(#2)} 

\definecolor{codegray}{rgb}{0.5,0.5,0.5}
\definecolor{codepurple}{rgb}{0.58,0,0.82}
\definecolor{backcolour}{rgb}{0.95,0.95,0.92}

\lstdefinestyle{mystyle}{
    backgroundcolor=\color{backcolour},   
    commentstyle=\color{black},
    keywordstyle=\color{black},
    numberstyle=\tiny\color{codegray},
    stringstyle=\color{codepurple},
    basicstyle=\ttfamily\footnotesize,
    breakatwhitespace=false,         
    breaklines=true,                 
    captionpos=b,                    
    keepspaces=true,                 
    numbers=left,                    
    numbersep=5pt,                  
    showspaces=false,                
    showstringspaces=false,
    showtabs=false,                  
    tabsize=2
}

\lstset{style=mystyle}

\title[Many $p$-regular conjugacy classes]{Finite groups with many $p$-regular conjugacy classes}
\author{Christopher A. Schroeder}
\date{\today}

\begin{document}

\maketitle

\vspace{-7pt}
\begin{quote}
\footnotesize
\textsc{Abstract.} Let $G$ be a finite group and let $p$ be a prime. In this paper, we study the structure of finite groups with a large number of $p$-regular conjugacy classes or, equivalently, a large number of irreducible $p$-modular representations. We prove sharp lower bounds for this number in terms of $p$ and the $p'$-part of the order of $G$ which ensure that $G$ is $p$-solvable. A bound for the $p$-length is obtained which is sharp for odd primes $p$. We also prove a new best possible criterion for the existence of a normal Sylow $p$-subgroup in terms of these quantities. 
\end{quote}
\vspace{15pt}

\section{Introduction}\label{sec:intro}
Let $G$ be a finite group. It was first pointed out by Gustafson \cite{G73} that the invariant $d(G) = k(G)/|G|$ is equal to the probability that two randomly chosen elements of $G$ commute, where $k(G)$ is the number of conjugacy classes of $G$. Many studies have shown that this invariant, often called the commuting probability or commutativity degree, encodes important structural information about the group; see \cite{E15,G06,G73,L95} and the references therein. For example, Gustafson proved in \cite{G73} that if $d(G)>5/8$, then $G$ is abelian. 

There is currently much interest in local versions of $d(G)$. To explain the idea, we need some notation. Let $\pi$ be a collection of primes. A positive integer $n$ is called a $\pi$-number if $\pi$ contains all the primes dividing $n$, and we define $n_\pi$ to be the largest $\pi$-number dividing $n$. An element of $G$ is called a $\pi$-element if its order is a $\pi$-number, and we write $k_\pi(G)$ for the number of conjugacy classes of $\pi$-elements of $G$. The $\pi$-local invariant $d_\pi(G) = k_\pi(G)/|G|_\pi$ captures information about the $\pi$-structure of the group. Indeed, Hung and Mar{\'o}ti proved in \cite{M14} that if $d_{\pi}(G)>5/8$, then $G$ contains an abelian Hall $\pi$-subgroup. Other results in this direction use the primes in $\pi$ to bound $d_{\pi}(G)$. Tong-Viet proved in \cite{TV20} that if $p$ is the smallest prime in a collection $\pi$ of odd primes and $d_\pi(G) > (p+1)/(2p)$, then $G$ has an abelian Hall $\pi$-subgroup and a normal $\pi$-complement. Very recently, Hung, Mar{\'o}ti and Mart{\'i}nez proved in \cite{M23} that if $d_\pi(G)>(p^2+p-1)/p^3$, then $G$ has an abelian Hall $\pi$-subgroup, where $p$ is the smallest prime in an arbitrary collection $\pi$ of primes. In another direction, Burness, Guralnick, Moret{\'o} and Navarro investigated a local probabilistic invariant, showing in \cite{B22} that the probability $\mr{Pr}_p(G)$ that two random $p$-elements of $G$ commute satisfies $\mr{Pr}_p(G)>(p^2+p-1)/p^3$ if and only if $G$ has a normal and abelian Sylow $p$-subgroup, where $p$ is a prime.

As these results indicate, investigations up to this point have been limited to controlling the $\pi$-structure of $G$ using $d_\pi(G)$. However, there are natural and important examples where one would like to control the $\pi'$-structure as well. Let $p$ be a prime and consider $\pi=p'$, the set of all primes not equal to $p$. Then $k_{p'}(G)$ is the number of conjugacy classes of elements whose order is not divisible by $p$, the so-called $p$-regular elements. It is also the number of inequivalent irreducible $p$-modular representations of $G$. Viewed in this light, the invariant $d_{p'}(G)=k_{p'}(G)/|G|_{p'}$ holds special interest because it reflects the $p$-modular representation theory of $G$. For instance, Sangroniz proved in \cite{S08} that if $G$ is $p$-solvable, then all irreducible $p$-Brauer characters of $G$ restrict irreducibly to a proper subgroup $H$ if and only if $d_{p'}(H)=d_{p'}(G)$. Recall that a finite group is $p$-solvable if it has a normal series in which each factor is either a $p$-group or a $p'$-group, and its $p$-length is the minimum possible number of factors that are $p$-groups among such normal series. G. Navarro has kindly provided us with examples, described at the end of Section \ref{sec:normalp}, showing that Sangroniz' result cannot be generalized to arbitrary finite groups.

In this paper, we consider the invariant $d_{p'}(G)$ and prove best possible bounds for controlling the $p$-structure of $G$. The key result is a lower bound ensuring that $G$ is $p$-solvable. We will consider $p$ odd and $p=2$ separately because the behavior is different in these two cases.

Here is our structure theorem for odd primes.

\begin{theorem}\label{thm1:odd} 
Let $G$ be a finite group and let $p$ be an odd prime.
\begin{enumerate}[nolistsep,label=\textup{(\arabic*)}]
    \item Let $d_{p'}(G)>1/(p-1)$. Then $G$ is $p$-solvable; moreover, $G$ has $p$-length at most 2 and the number of nonabelian simple $p'$-factors in a composition series for $G$ is strictly less than $\ln(p-1)/\ln(12)$. In particular, if $p \le 13$, then $G$ is solvable.
    \item Assume $G$ is not $p$-solvable. Let $H=G/\mbf{O}_p(G)$ and $Z=\Zen{H}$. Then $d_{p'}(G)=1/(p-1)$ if and only if $p>3$, $H/Z$ is isomorphic to $\psl{2}{p}$ or $\pgl{2}{p}$, and $H' \cong \psl{2}{p}$.
\end{enumerate}
\end{theorem}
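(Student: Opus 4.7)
The plan is to use the Classification of Finite Simple Groups (CFSG), centered on a key inequality for nonabelian simple groups. The core technical ingredient will be the following lemma: for every nonabelian simple group $S$ with $p$ an odd prime dividing $|S|$, one has $d_{p'}(S) \leq 1/(p-1)$, with equality if and only if $S \cong \psl{2}{p}$. For $\psl{2}{p}$ itself, the character table yields $k_{p'}(\psl{2}{p}) = (p+1)/2$ and $|\psl{2}{p}|_{p'} = (p^2-1)/2$, so $d_{p'}(\psl{2}{p}) = 1/(p-1)$ exactly. The strict inequality for every other simple group would be established by working through the CFSG families: alternating groups via partition-counting identities, Lie-type groups in defining characteristic $p$ by bounding the number of semisimple classes, Lie-type groups in non-defining characteristic via Deligne--Lusztig-type decompositions, and the 26 sporadic groups by direct inspection.

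For part (1), I would argue by induction on $|G|$. The Nagao-type inequality $k_{p'}(G) \leq k_{p'}(N)\, k_{p'}(G/N)$ for $N \nrm G$, combined with $|G|_{p'} = |N|_{p'}\, |G/N|_{p'}$, gives the submultiplicativity $d_{p'}(G) \leq d_{p'}(N)\, d_{p'}(G/N)$. A minimal counterexample to $p$-solvability must have a nonabelian simple composition factor $S$ with $p \mid |S|$ appearing above $\mbf{O}_p(G)$, and the key CFSG lemma then yields $d_{p'}(G) \leq 1/(p-1)$, a contradiction. Once $p$-solvability is known, I would invoke Hall--Higman-type estimates to show that $p$-length at least $3$ would drive $d_{p'}(G)$ below $1/(p-1)$, establishing the $p$-length bound. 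For the count $n$ of nonabelian simple $p'$-composition factors, I would use the classical fact that every nonabelian simple group $S$ satisfies $d(S) = k(S)/|S| \leq 1/12$ (attained by $A_5$); then submultiplicativity gives $(1/12)^n \geq d_{p'}(G) > 1/(p-1)$, hence $n < \ln(p-1)/\ln 12$. For $p \leq 13$ this forces $n = 0$, and combined with $p$-solvability we conclude solvability.

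For part (2), I would trace the equality cases through the chain of inequalities in part (1). Equality in the CFSG lemma pins the unique nonabelian composition factor above $\mbf{O}_p(G)$ to be $\psl{2}{p}$, and equality in the submultiplicative inequality then forces a highly constrained extension structure, giving $H' \cong \psl{2}{p}$. The outer action is restricted to the diagonal automorphism, so $H/Z$ is either $\psl{2}{p}$ or $\pgl{2}{p}$. A direct calculation verifies that $\pgl{2}{p}$ also achieves equality: it has a single class of order-$p$ elements, so $k_{p'}(\pgl{2}{p}) = p+1$ and $d_{p'}(\pgl{2}{p}) = (p+1)/(p^2-1) = 1/(p-1)$. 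Central $p'$-extensions contribute neutrally to $d_{p'}$, which accounts for the general quotient structure by $Z = \Zen{H}$ stated in the theorem. The restriction $p > 3$ would emerge because $\psl{2}{3}$ is solvable, so the non-$p$-solvable hypothesis excludes $p = 3$.

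The main obstacle will be proving the uniform CFSG lemma with its tight equality characterization. The most delicate cases are the low-rank Lie-type groups in the defining characteristic $p$ (for instance $\psl{2}{p^a}$ with $a \geq 2$), where the ratio $k_{p'}/|G|_{p'}$ can be uncomfortably close to $1/(p-1)$ and must be shown strictly smaller via a careful count of semisimple classes. A secondary technical point is verifying the Nagao-style inequality for $p$-regular classes together with its equality conditions, which is standard via modular Clifford theory but requires careful bookkeeping to extract the rigid extension structure needed for part (2).
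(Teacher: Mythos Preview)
Your overall architecture matches the paper's closely: the same CFSG lemma (simple $S$ with $p\mid|S|$ has $d_{p'}(S)\le 1/(p-1)$, equality only at $\psl{2}{p}$), the same submultiplicativity $d_{p'}(G)\le d_{p'}(N)\,d_{p'}(G/N)$ to deduce $p$-solvability, and the same use of Dixon's bound $d(S)\le 1/12$ for the count of nonabelian $p'$-composition factors. For part~(2) your equality-tracing sketch is also broadly in line with the paper, though you underplay two steps the paper works out carefully: showing that the $p$-solvable radical is genuinely \emph{central} (not just abelian), and using the Schur multiplier of $\psl{2}{p}$ to rule out $H'\cong\sl{2}{p}$ (the paper checks $d_{p'}(\sl{2}{p})=p/(p^2-1)\neq 1/(p-1)$).

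The real gap is the $p$-length bound. ``Hall--Higman-type estimates'' is not the mechanism, and it is not clear any classical Hall--Higman statement yields what you need. The paper instead proves, as an independent theorem, a sharp \emph{normal Sylow $p$-subgroup criterion}: if $d_{p'}(G)>2/(p+1)$ then $\mbf{O}_p(G)\in\mr{Syl}_p(G)$. This is established by an orbit-counting argument in a minimal counterexample (reduce to $G=NP$ with $N=\mbf{O}_{p'}(G)$, bound $k_{p'}(G)$ by counting $P$-orbits on $N$, and use $[N:\Cen{N}{P}]\ge p+1$ from Sylow). Only with this in hand does the $p$-length bound follow: along the upper $\mbf{O}_{p',p}$-series $1=M_0\nrm M_1\nrm\cdots$, each factor $M_{i+1}/M_i$ for $i\ge 1$ has $\mbf{O}_p=1$ by construction, so its Sylow $p$-subgroup is non-normal and hence $d_{p'}(M_{i+1}/M_i)\le 2/(p+1)$; if the $p$-length exceeded~$2$ one would get $d_{p'}(G)\le(2/(p+1))^2<1/(p-1)$. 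You have not identified this intermediate criterion, and without it the $p$-length claim is unsupported.
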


For $p=3$, most of Theorem \ref{thm1:odd}(1) follows by setting $\pi=3'$ in \cite[Theorem C]{TV20}. For $p>3$, Theorem \ref{thm1:odd}(1) is new and part (2) shows the $p$-solvable criterion is best possible. The bound for the $p$-length in Theorem \ref{thm1:odd} is best possible. This will be proved after Theorem \ref{thm:normalp}. Note that in the situation of Theorem \ref{thm1:odd}(1), $G$ need not be solvable for $p>13$, as $A_5$ shows. It follows immediately from \cite[Theorem 1.1]{M23} that $G$ has an abelian Hall $\pi$--subgroup in part (1), where $\pi$ is the collection of primes greater than $p$. 

Next is a structure theorem for $p=2$. Recall that a $2$-solvable group is solvable by the Feit--Thompson Odd Order Theorem.

\begin{theorem}\label{thm2:even}
Let $G$ be a finite group.
\begin{enumerate}[nolistsep,label=\textup{(\arabic*)},resume]
    \item Let $d_{2'}(G)>4/15$. Then $G$ is solvable; moreover, $G$ has $2$-length at most 4.
    \item Assume $G$ is nonsolvable. Let $H=G/\mbf{O}_2(G)$ and $Z=\Zen{H}$. Then $d_{2'}(G)=4/15$ if and only if $H \cong A_5 \times Z$.
\end{enumerate}
\end{theorem}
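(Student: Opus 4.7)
The plan is to mirror the proof structure of Theorem \ref{thm1:odd}, adapting it to the special role played by $A_5$ as the smallest nonabelian simple group (and, as we will see, the unique one with $d_{2'} = 4/15$). The first reduction is the observation that $d_{2'}(G) = d_{2'}(G/\mbf{O}_2(G))$: since $\mbf{O}_2(G)$ is a normal $2$-group, its only $2$-regular element is the identity, so the projection induces a bijection on $2$-regular conjugacy classes, and the odd parts of the orders coincide. Throughout I would therefore assume $\mbf{O}_2(G) = 1$, which explains the appearance of $H = G/\mbf{O}_2(G)$ in the statement.

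The heart of part (1) is a CFSG-based simple-group estimate: for every nonabelian finite simple group $S$, $d_{2'}(S) \le 4/15$, with equality if and only if $S \cong A_5$. The value $d_{2'}(A_5) = 4/15$ is a direct computation, since $A_5$ has exactly four odd-order classes (identity, one class of $3$-cycles, and two classes of $5$-cycles) while $|A_5|_{2'} = 15$. For $A_n$ with $n \ge 6$, the odd part $|A_n|_{2'}$ grows much faster than $k_{2'}(A_n)$; the sporadic cases follow from the ATLAS; and for Lie-type groups standard bounds on the number of semisimple classes in odd characteristic (resp.\ unipotent classes in characteristic $2$) combined with the order formula give the inequality with room to spare, except for finitely many small cases which one handles by direct computation.

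For the global step in part (1), take a minimal counterexample $G$: nonsolvable with $d_{2'}(G) > 4/15$ and $\mbf{O}_2(G) = 1$. I would analyse $F^*(G) = F(G)\,E(G)$, noting that $F(G)$ has odd order and $E(G)$ is a direct product of nonabelian simple groups (quasisimple covers with $2$-element center are ruled out by $\mbf{O}_2(G) = 1$). An extension-type inequality bounding $d_{2'}(G)$ by the product $d_{2'}(N) \cdot d_{2'}(G/N)$ for an appropriate normal subgroup $N$, together with the simple-group bound applied to a nonabelian direct factor of $E(G)$, yields $d_{2'}(G) \le 4/15$, a contradiction. The $2$-length bound of at most $4$ in the solvable case then follows by iterating the same inequality along the chain $1 \nrm \mbf{O}_{2'}(G) \nrm \mbf{O}_{2',2}(G) \nrm \cdots$: each successive $2$-factor multiplicatively shrinks $d_{2'}$ of the quotient in a controlled way, and four such factors suffice to drive the invariant below $4/15$.

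For part (2), equality $d_{2'}(G) = 4/15$ in a nonsolvable $G$ forces tightness at every step of the preceding reduction: the relevant simple composition factor must be $A_5$ itself, and the extension inequality must hold with equality, which is rigid enough to force $H$ to split as $A_5 \times Z$ with $Z = \Zen{H}$ of odd order. Conversely, for such an $H$ one computes $d_{2'}(H) = d_{2'}(A_5) \cdot d_{2'}(Z) = (4/15)\cdot 1 = 4/15$. I expect the main obstacle to be the uniform simple-group inequality, particularly the equality characterization for small Lie-type groups in characteristic $2$, and the rigidity step in part (2) which upgrades an $A_5$ composition factor to a genuine direct-product decomposition of $H$.
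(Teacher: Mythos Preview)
Your overall architecture is sound and matches the paper's: reduce modulo $\mbf{O}_2(G)$, establish $d_{2'}(S)\le 4/15$ for every nonabelian simple $S$ with equality only for $A_5$, and propagate via the submultiplicativity of $d_{2'}$ along a normal series. Two points deserve comment, one a genuine difference in method and one a gap.

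\textbf{The simple-group bound.} Your plan is to estimate $d_{2'}(S)$ directly, family by family. The paper takes a different route that avoids most of this work: it uses the monotonicity $d_{2'}(S)\le d_\pi(S)$ for any $\pi\subseteq 2'$ (Lemma~\ref{lemma:1}(2)). For simple $S$ with at least four prime divisors one can choose $\pi=\{p,s\}$ with $5\le p<s$ both odd and both dividing $|S|$, and then invoke the results of \cite{M23} (collected in Proposition~\ref{thm2}) to obtain $d_\pi(S)\le 1/p\le 1/5<4/15$; a short extra argument handles the exceptional cases of Proposition~\ref{thm2}(4). The finitely many simple groups with at most three prime divisors are checked directly (Table~\ref{table:4primes}). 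Your direct approach would work too, but the $d_\pi$-trick buys the inequality almost for free from existing literature.

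\textbf{The $2$-length bound.} Here your proposal has a gap. Saying each $2$-layer ``multiplicatively shrinks $d_{2'}$ in a controlled way'' is not an argument until you name the constant, and the constant is the whole point. The paper supplies it via Theorem~\ref{thm:normalp}: if a finite group has no normal Sylow $2$-subgroup then $d_{2'}\le 2/3$. In the upper $2',2$-series $M_0=1\nrm M_1\nrm\cdots$, each factor $M_{i+1}/M_i$ with $i\ge 1$ has a nontrivial, non-normal Sylow $2$-subgroup, so $d_{2'}(M_{i+1}/M_i)\le 2/3$; if the $2$-length exceeded $4$ this would give $d_{2'}(G)\le(2/3)^4=16/81<4/15$, a contradiction. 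Without Theorem~\ref{thm:normalp} (which is proved independently, using the solvability criterion as input), there is no reason four layers should be enough.

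\textbf{Part (2).} Your rigidity sketch is on the right track, but the paper's argument is more concrete than ``the extension inequality must hold with equality.'' Once $G/Z$ is almost simple with socle $A_5$, one must rule out $G/Z\cong S_5$; the paper does this by computing $d_{2'}(S_5)=1/5\ne 4/15$. The splitting $H\cong A_5\times Z$ then comes from the fact that the Schur multiplier of $A_5$ is $C_2$ while $Z$ has odd order, so the central extension is trivial. Naming these two ingredients would close your sketch.
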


Again, it follows immediately from \cite[Theorem 1.1]{M23} that $G$ has an abelian Hall $\pi$--subgroup in part (1), where $\pi$ is the collection of primes greater than $3$. 

Using the $p$-solvability criteria in Theorems \ref{thm1:odd} and \ref{thm2:even}, we may deduce a criterion for the existence of a normal Sylow $p$-subgroup. 

\begin{theorem}\label{thm:normalp}
Let $G$ be a finite group and let $p$ be a prime. If $d_{p'}(G)>2/(p+1)$, then $G$ has a normal Sylow $p$-subgroup.
\end{theorem}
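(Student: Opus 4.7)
The plan is to leverage Theorems \ref{thm1:odd} and \ref{thm2:even} to first secure $p$-solvability, and then run an inductive argument on $|G|$. For $p\ge 5$ one checks $2/(p+1)>1/(p-1)$, so the hypothesis gives $d_{p'}(G)>1/(p-1)$ and Theorem \ref{thm1:odd}(1) applies; for $p=3$ the two thresholds coincide at $1/2$, and Theorem \ref{thm1:odd}(1) again yields solvability; for $p=2$ the hypothesis gives $d_{2'}(G)>2/3>4/15$, so Theorem \ref{thm2:even}(1) applies. In every case $G$ is $p$-solvable.

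Now assume $G$ is a minimal counterexample. Since the number of irreducible $p$-Brauer characters depends only on $G/\mbf{O}_p(G)$, we have $k_{p'}(G)=k_{p'}(G/\mbf{O}_p(G))$ and hence $d_{p'}(G)=d_{p'}(G/\mbf{O}_p(G))$. Minimality forces $\mbf{O}_p(G)=1$, so by $p$-solvability $M=\mbf{O}_{p'}(G)$ is a nontrivial self-centralizing normal $p'$-subgroup on which any Sylow $p$-subgroup $P$ acts faithfully by conjugation.

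To bound $k_{p'}(G)$ I would use that $G$ has a Hall $p'$-subgroup $H\ge M$, and that by a classical theorem of Wielandt two $p$-regular elements of $G$ are $G$-conjugate if and only if they are $\Norm{G}{H}$-conjugate; so $k_{p'}(G)$ is exactly the number of orbits of $\Norm{G}{H}/H$ on the conjugacy classes of $H$. The hypothesis $d_{p'}(G)>2/(p+1)$ translates into a lower bound on this orbit count, while the non-normality of $P$ in $G$ forces $\Norm{G}{H}$ to act nontrivially on the conjugacy classes of $H$ lying in $M$. The aim is to show that the fusion created by this nontrivial action is already enough to violate the lower bound.

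The main obstacle will be making the fusion estimate sharp against the extremal examples $S_3$ (for $p=2$) and $S_4$ (for $p=3$). I anticipate reducing to the case where $M$ is a chief factor of $G$, analyzing $P$-orbit sizes on $\mr{Irr}(M)$ via Clifford theory, and combining this with the elementary fact that a nontrivial $p$-group acting faithfully on an abelian $p'$-group must possess an orbit of length at least $p$ on the nonidentity elements. Reconciling the resulting orbit count with the precise threshold $2/(p+1)$ is where the delicate arithmetic lies, and where for $p=3$ some extra care is needed to separate the $S_4$ configuration from nearby solvable configurations that approach the bound.
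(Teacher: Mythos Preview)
Your opening reduction (minimal counterexample, $p$-solvability via Theorems \ref{thm1:odd} and \ref{thm2:even}, $\mbf{O}_p(G)=1$, $M=\mbf{O}_{p'}(G)>1$) matches the paper exactly. The divergence comes immediately after, and it is a genuine gap: you are missing one further reduction that collapses the problem to a two-line orbit count, and without it your proposed machinery (Wielandt fusion in a general Hall $p'$-subgroup, Clifford theory on $\mr{Irr}(M)$, reduction to $M$ a chief factor) is both unnecessary and, as you yourself flag, unfinished.

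The missing step is to show $G=MP$ with $P\in\mr{Syl}_p(G)$. This follows from minimality twice more: since $d_{p'}(G/M)>2/(p+1)$ by Lemma \ref{lemma:1}(1), minimality gives $MP/M\nrm G/M$, hence $MP\nrm G$; then $d_{p'}(MP)>2/(p+1)$ again by Lemma \ref{lemma:1}(1), and if $MP<G$ minimality gives $P\nrm MP$, whence $P$ is characteristic in $MP\nrm G$ and $P\nrm G$, a contradiction. So $G=MP$. Now the Hall $p'$-subgroup \emph{is} $M$, there is no Wielandt fusion to analyse, and the $p'$-classes of $G$ are exactly the $G$-orbits on $M$, which are unions of $P$-orbits. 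Counting $P$-orbits on $M$ (the $|\Cen{M}{P}|$ fixed points and nontrivial orbits of size at least $p$) gives
\[
k_{p'}(G)\le \frac{|M|-|\Cen{M}{P}|}{p}+|\Cen{M}{P}|,
\]
and Dedekind's modular law plus Sylow's theorem yield $[M:\Cen{M}{P}]=[G:\Norm{G}{P}]\ge p+1$. Dividing by $|M|=|G|_{p'}$ gives $d_{p'}(G)\le 1/p+(p-1)/\bigl(p(p+1)\bigr)=2/(p+1)$, the desired contradiction. No case distinction on $p$, no chief-factor analysis, and no ``delicate arithmetic'' near $S_3$ or $S_4$ is needed; the Sylow bound $[G:\Norm{G}{P}]\ge p+1$ is exactly what makes $2/(p+1)$ the right threshold.
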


The bound in Theorem \ref{thm:normalp} is best possible. Suppose $p$ is a Mersenne prime $p=2^n-1$ and let $G=C_2^n \rtimes C_p$, where $C_p$ acts transitively on $C_2^n-\{1\}$. Then $k_{p'}(G) = 2$ and $|G|_{p'}=2^n=p+1$, so $d_{p'}(G)=2/(p+1)$ and $G$ does not have a normal Sylow $p$-subgroup. The bound is also sharp for $p=2$. If $G=S_3$, then $d_{2'}(G)= 2/3$ and $G$ does not have a normal Sylow $2$-subgroup.

Now we prove that the bound for the $p$-length in Theorem \ref{thm1:odd} is also best possible. Consider again the Frobenius group $G=C_2^n \rtimes C_p$ where $p=2^n-1$ is a Mersenne prime. Then $G$ acts on the vector space $V \cong \mr{GF}(p)^{2^n}$ by permuting basis vectors according to its action on the $2^n$ cosets of $C_p$ in $G$. The one-dimensional subspace of $V$ spanned by the sum of basis vectors is $G$-invariant, and the complementary subspace $W$ is an irreducible $p$-modular representation of $G$ of degree $2^n-1=p$. Then $H=W \rtimes G \cong C_p^p \rtimes (C_2^n \rtimes C_p)$ has $p$-length 2 and $d_{p'}(H) \ge 2/2^n>1/(p-1)$ when $p>3$. For $p=2$, the symmetric group $S_4$ has $2$-length $2$ and $d_{2'}(G)=2/3 > 4/15$. We have been unable to construct a finite group $G$ of $2$-length greater than $2$ which satisfies $d_{2'}(G)>4/15$.

Finally, if $G$ is $p$-solvable of $p$-length $k$, we may deduce an upper bound for $d_{p'}(G)$ using the normal Sylow $p$-subgroup criterion of Theorem \ref{thm:normalp}.

\begin{theorem}\label{thm4:plength} 
Let $G$ be a finite group and let $p$ be a prime. If $G$ is $p$-solvable of $p$-length $k$, then $d_{p'}(G) \le \big( 2/(p+1) \big)^{k-1}$.
\end{theorem}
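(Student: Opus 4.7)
My plan is to prove Theorem \ref{thm4:plength} by first establishing a strengthened intermediate claim $(\star)$: if $G$ is $p$-solvable with $\mbf{O}_p(G) = 1$ and $p$-length $k$, then $d_{p'}(G) \le (2/(p+1))^k$. Theorem \ref{thm4:plength} will then follow via the standard identity $d_{p'}(G) = d_{p'}(G/\mbf{O}_p(G))$ (every normal $p$-subgroup acts trivially on each irreducible $p$-modular representation, so $k_{p'}(G) = k_{p'}(G/\mbf{O}_p(G))$, while $|G|_{p'}$ is unchanged upon quotienting by a $p$-group) together with the fact that the $p$-length of $G/\mbf{O}_p(G)$ is at least $k - 1$ by subadditivity; applying $(\star)$ to $G/\mbf{O}_p(G)$ then yields $d_{p'}(G) \le (2/(p+1))^{k-1}$.

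I would prove $(\star)$ by induction on $k$. The base case $k = 0$ is immediate since $G$ is then a $p'$-group. For $k \ge 1$, set $L = \mbf{O}_{p',p}(G)$, the preimage in $G$ of $\mbf{O}_p(G/\mbf{O}_{p'}(G))$. Since $G$ is $p$-solvable and nontrivial with $\mbf{O}_p(G) = 1$, the Fitting subgroup forces $\mbf{O}_{p'}(G) \ne 1$, so $L$ is well-defined and has $p$-length exactly $1$. Crucially, $\mbf{O}_p(L) = 1$: as $\mbf{O}_p(L)$ is characteristic in $L$ and $L \nrm G$, we have $\mbf{O}_p(L) \nrm G$, hence $\mbf{O}_p(L) \le \mbf{O}_p(G) = 1$. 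Since $L$ has $p$-length $1$ and trivial $\mbf{O}_p$, it cannot have a normal Sylow $p$-subgroup (such a Sylow would coincide with $\mbf{O}_p(L) = 1$, forcing $L$ to be a $p'$-group and contradicting $p$-length $1$). The contrapositive of Theorem \ref{thm:normalp} applied to $L$ then gives $d_{p'}(L) \le 2/(p+1)$.

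The quotient $G/L$ has $p$-length $k - 1$ and satisfies $\mbf{O}_p(G/L) = 1$: if $H/L$ were a nontrivial normal $p$-subgroup of $G/L$, then $H/\mbf{O}_{p'}(G)$ would be a normal $p$-subgroup of $G/\mbf{O}_{p'}(G)$ (being an extension of the $p$-group $L/\mbf{O}_{p'}(G)$ by $H/L$), so contained in $\mbf{O}_p(G/\mbf{O}_{p'}(G)) = L/\mbf{O}_{p'}(G)$, forcing $H \le L$. The inductive hypothesis applied to $G/L$ gives $d_{p'}(G/L) \le (2/(p+1))^{k-1}$. Combining via the Gallagher-type inequality $k_{p'}(G) \le k_{p'}(L) \cdot k_{p'}(G/L)$ and the factorization $|G|_{p'} = |L|_{p'} \cdot |G/L|_{p'}$ yields $d_{p'}(G) \le d_{p'}(L) \cdot d_{p'}(G/L) \le (2/(p+1))^k$, completing the proof of $(\star)$.

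The main obstacle is verifying the Gallagher-type inequality $k_{p'}(G) \le k_{p'}(L) \cdot k_{p'}(G/L)$ for $L \nrm G$, a $p$-modular analogue of Gallagher's classical inequality on conjugacy classes. This should follow from Clifford theory for irreducible Brauer characters, counting them as sums over $G$-orbits on $\mathrm{IBr}(L)$ with stabilizer contributions bounded by $k_{p'}(G/L)$, but the argument warrants careful verification. A secondary point requiring care is the assertion that $p$-length of $G/\mbf{O}_p(G)$ is at least $k-1$ and of $G/L$ is exactly $k-1$; both follow routinely from unpacking the upper $p$-series.
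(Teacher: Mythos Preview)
Your proof is correct and follows essentially the same route as the paper: both arguments iterate the submultiplicativity inequality $d_{p'}(G) \le d_{p'}(N)\,d_{p'}(G/N)$ along the upper $p',p$-series and invoke Theorem~\ref{thm:normalp} at each nontrivial step to pick up a factor of $2/(p+1)$. The paper does this directly as a product over the factors $M_{i+1}/M_i$, while you repackage it as an induction after first reducing to $\mbf{O}_p(G)=1$; your intermediate claim $(\star)$ is a mild sharpening but not a new idea.

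Your flagged ``main obstacle'' is not an obstacle at all: the inequality $k_{p'}(G)\le k_{p'}(N)\,k_{p'}(G/N)$ (equivalently $d_{p'}(G)\le d_{p'}(N)\,d_{p'}(G/N)$) is precisely Lemma~\ref{lemma:1}(1), which the paper quotes from the literature and uses throughout, so no Clifford-theoretic verification is needed. Your secondary concerns about the $p$-length of $G/\mbf{O}_p(G)$ and of $G/L$ are handled correctly; the paper addresses exactly the same bookkeeping (that $\mbf{O}_p(G/M_i)=1$ for $i\ge 1$ and that the relevant Sylow $p$-subgroups are nontrivial and nonnormal) in the analogous Propositions~\ref{prop:6.1} and~\ref{prop:6.2}.
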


Our paper is organized as follows. In Section \ref{sec:dp'} we discuss some important properties of $d_{\pi}(G)$ and prove some necessary lemmas. We prove the $p$-solvability criteria of Theorem \ref{thm1:odd}(1) and Theorem \ref{thm2:even}(1) in Section \ref{sec:psolvableodd}. These proofs use the Classification of Finite Simple Groups. In Section \ref{sec:normalp}, we prove Theorem \ref{thm:normalp}. We  prove Theorem \ref{thm4:plength} as well as the remaining parts of Theorem \ref{thm1:odd} and Theorem \ref{thm2:even} first in Section \ref{sec:normalp} because the proofs require our $p$-solvability criterion and our normal Sylow $p$-subgroup criterion. The proof of Theorem \ref{thm1:odd} involves some straightforward GAP \cite{GAP4} calculations which we include in Section \ref{sec:calculations}.

\section{Properties of \texorpdfstring{$d_\pi(G)$}{dpi(G)}}\label{sec:dp'}
In this section, we discuss the properties of $d_\pi(G)$ that we will need in the rest of the paper.
\begin{lemma} \label{lemma:1}
Let $G$ be a finite group and $\pi$ a collection of primes.
\begin{enumerate}[nolistsep,label=\textup{(\arabic*)}]
    \item We have $d_\pi(G) \le 1$. If $N \nrm G$, then $d_\pi(G) \le d_\pi(G/N) d_\pi(N)$. In particular, $d_\pi(G) \le d_\pi(G/N)$ and $d_\pi(G) \le d_\pi(N)$.
    \item If $\mu \subseteq \pi$, then $d_\pi(G) \le d_\mu(G)$.
\end{enumerate}
\end{lemma}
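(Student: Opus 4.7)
The plan is to handle the two inequalities in part (1) separately, derive the ``in particular'' statements as consequences, and then obtain part (2) by combining (1) with the primary decomposition of $\pi$-elements. For the bound $d_\pi(G) \le 1$, equivalent to $k_\pi(G) \le |G|_\pi$, I would argue by induction on $|\pi|$. When $\pi = \{p\}$, Sylow's theorem gives that every $G$-conjugacy class of $p$-elements meets a fixed Sylow $p$-subgroup $P$, and these intersections partition $P$, so $k_p(G) \le |P| = |G|_p$. When $|\pi| \ge 2$, fix any $p \in \pi$ and use the unique commuting factorisation $x = x_p\, x_{\pi\setminus\{p\}}$ of each $\pi$-element into its $p$-part and its $(\pi\setminus\{p\})$-part to derive the well-known class formula
\[
k_\pi(G) \;=\; \sum_{[u]} k_{\pi\setminus\{p\}}(\Cen{G}{u}),
\]
with $[u]$ ranging over $G$-conjugacy classes of $p$-elements of $G$. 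By induction each summand is at most $|\Cen{G}{u}|_{\pi\setminus\{p\}} \le |G|_{\pi\setminus\{p\}}$, and the sum has $k_p(G) \le |G|_p$ terms, whence $k_\pi(G) \le |G|_\pi$.

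The multiplicative bound $d_\pi(G) \le d_\pi(G/N) d_\pi(N)$ is equivalent to $k_\pi(G) \le k_\pi(N)\,k_\pi(G/N)$, since $|G|_\pi = |N|_\pi \cdot |G/N|_\pi$. I would prove this by adapting Nagao's classical proof of the ordinary inequality $k(G) \le k(N)k(G/N)$. Each $G$-class of $\pi$-elements projects onto a unique $G/N$-class of a $\pi$-element of $G/N$, so it suffices to show that for every such class $[\bar g]$, the number of $G$-classes of $\pi$-elements above $[\bar g]$ is at most $k_\pi(N)$. Lift $\bar g$ to an honest $\pi$-element $g \in G$ by taking the $\pi$-part of any preimage. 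The $G$-classes in the fibre correspond to orbits of $\pi$-elements of $gN$ under conjugation by $\Cen{G}{\bar g}$, and Brauer's permutation lemma applied to the twisted conjugation action of $N$ on $gN$ (together with the observation that every $\pi$-element of $gN$ is $N$-conjugate to one of the form $gn$ with $n \in \Cen{N}{g}$ a $\pi$-element) bounds this count by $k_\pi(N)$. Summing over the $k_\pi(G/N)$ fibres yields the claim, and the ``in particular'' consequences $d_\pi(G) \le d_\pi(N)$ and $d_\pi(G) \le d_\pi(G/N)$ follow from $d_\pi(N), d_\pi(G/N) \le 1$.

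For part (2), I would invoke the primary decomposition again, this time with $\mu$ in place of $\{p\}$: factoring each $\pi$-element as a $\mu$-part times a $(\pi\setminus\mu)$-part gives
\[
k_\pi(G) \;=\; \sum_{[u]} k_{\pi\setminus\mu}(\Cen{G}{u}),
\]
summed over $G$-conjugacy classes of $\mu$-elements of $G$. By part (1), each summand is at most $|\Cen{G}{u}|_{\pi\setminus\mu} \le |G|_{\pi\setminus\mu}$, so $k_\pi(G) \le k_\mu(G) \cdot |G|_{\pi\setminus\mu}$, and dividing by $|G|_\pi = |G|_\mu \cdot |G|_{\pi\setminus\mu}$ yields $d_\pi(G) \le d_\mu(G)$. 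The technical crux of the entire argument is the $\pi$-analogue of Nagao's theorem inside part (1); once that is established, everything else is routine manipulation of the decomposition formula together with the trivial factorisation $|G|_\pi = |G|_\mu \cdot |G|_{\pi\setminus\mu}$.
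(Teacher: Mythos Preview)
The paper does not prove this lemma: its ``proof'' consists entirely of three citations (to \cite{M21}, \cite{F12}, and \cite{M14}). Your attempt to give a self-contained argument therefore goes beyond what the paper does, and most of it is sound. The induction on $|\pi|$ establishing $d_\pi(G)\le 1$ is correct, and your derivation of part~(2) from part~(1) via the decomposition $k_\pi(G)=\sum_{[u]}k_{\pi\setminus\mu}(\Cen{G}{u})$ is exactly Robinson's argument (the one the paper cites from \cite{M14}).

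There is, however, a genuine gap in your sketch of the multiplicative bound $k_\pi(G)\le k_\pi(N)\,k_\pi(G/N)$. The parenthetical ``observation that every $\pi$-element of $gN$ is $N$-conjugate to one of the form $gn$ with $n\in\Cen{N}{g}$ a $\pi$-element'' is false. Take $G=S_4$, $N=A_4$, $\pi=\{2,3\}$, and $g=(1\,2)$. Then $\Cen{N}{g}=\{1,(1\,2)(3\,4)\}$, so the only elements of the form $gn$ with $n\in\Cen{N}{g}$ are the transpositions $(1\,2)$ and $(3\,4)$. But the coset $gN$ consists of all twelve odd permutations and in particular contains the six $4$-cycles, which are $\pi$-elements yet cannot be conjugated---by $N$ or even by $G$---to a transposition. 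Your appeal to Brauer's permutation lemma does not repair this: whatever correspondence it sets up between $N$-orbits on $gN$ and $N$-classes in $N$, there is no reason it should match $\pi$-elements with $\pi$-elements. The inequality itself is true (it is Lemma~2.3 of \cite{F12}, which the paper cites), but a correct proof requires a more careful counting argument than the Nagao-style sketch you give; this is precisely the ``technical crux'' you identify, and it is not yet discharged.
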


\begin{proof}
The first claim in part (1) is \cite[Lemma 3.5]{M21}, the second claim is \cite[Lemma 2.3]{F12}. Part (2) is essentially due to G.R. Robinson and can be found in \cite[Proposition 5]{M14}. 
\end{proof}

Given a finite group $G$, define $\pi(G)$ to be the collection of primes dividing $|G|$. In the course of our proofs, we will require that the set $\pi(G)$ is sufficiently rich. The following lemma addresses the cases in which the order of a nonabelian finite simple group is divisible by few distinct primes.

\begin{lemma}\label{lemma:2}
The order of a nonabelian finite simple group is divisible by at least 4 distinct primes except if $G$ is one of the groups in Table \ref{table:4primes}. In each of these cases, $d_{2'}(G) \le 4/15$, with equality if and only if $G \cong A_5$.
\end{lemma}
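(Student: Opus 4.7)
The first assertion is a classical consequence of two ingredients. Burnside's $p^aq^b$ theorem immediately rules out nonabelian simple groups whose orders have only two prime divisors, so we are reduced to enumerating those whose orders are divisible by exactly three distinct primes. This enumeration is due to Herzog, and it gives precisely eight groups, which I expect to be the contents of Table~\ref{table:4primes}: $A_5$, $A_6$, $\psl{2}{7}$, $\psl{2}{8}$, $\psl{2}{17}$, $\psl{3}{3}$, $\psu{3}{3}$, and $\psu{4}{2}$. Rather than reproducing the classification, I would simply cite Herzog's result.

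For the second assertion, the plan is direct case-by-case verification of $d_{2'}(G) \le 4/15$ over the eight groups. For each $G$, I compute $|G|_{2'}$ and count $k_{2'}(G)$, the number of conjugacy classes of odd-order elements, reading the data off standard character tables (the ATLAS, or equivalently GAP's character table library). The only case where equality is achieved is $A_5$: here $|A_5|_{2'}=15$, and the odd-order classes are the identity, the class of $3$-cycles, and the two classes of $5$-cycles, giving $k_{2'}(A_5)=4$ and $d_{2'}(A_5)=4/15$.

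For the remaining seven groups, a short tabulation produces ratios that are strictly, and usually substantially, smaller than $4/15 \approx 0.267$. Representative computations include $d_{2'}(A_6)=5/45=1/9$, $d_{2'}(\psl{2}{7})=4/21$, $d_{2'}(\psl{2}{8})=8/63$, and $d_{2'}(\psl{2}{17})=7/153$; the three exceptional groups $\psl{3}{3}$, $\psu{3}{3}$, and $\psu{4}{2}$ give $d_{2'}$ values well below $0.03$. In each case the inequality is comfortably strict.

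The main obstacle is not conceptual but purely bookkeeping: carefully distinguishing $2$-regular from $2$-singular classes and correctly extracting the odd part of each order. A brief GAP computation over the eight named groups would serve as a rigorous sanity check, and would also confirm that no exceptional behavior has been missed in any of the seven non-$A_5$ cases.
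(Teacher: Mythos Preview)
Your proposal is correct and follows essentially the same approach as the paper: cite a known classification of the nonabelian simple groups whose order has at most three prime divisors, then verify the eight values of $d_{2'}(G)$ by direct computation. The only cosmetic difference is the reference for the classification---you invoke Herzog, while the paper cites Huppert--Lempken; your enumerated list of eight groups and the computed ratios all agree with the paper's Table~\ref{table:4primes}.
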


\begin{proof}
It is proved in \cite{H00} that the groups listed in Table \ref{table:4primes} are the only nonabelian finite simple groups whose order is not divisible by four distinct primes. The values of $d_{2'}(G)$ can be confirmed with the GAP \cite{GAP4} code given in in Section \ref{sec:calculations}. 
\end{proof}

Finally, we recall the notion of a $\pi$-witness as defined in \cite{M21}: Let $\pi$ be a collection of primes. A subgroup $H \le G$ is a $\pi$-witness for $G$ if $H$ is a $\pi$-subgroup and $k_\pi(G) \le k_\pi(H)=k(H)$. That is, the number of $\pi$-classes of $G$ is bounded above by the number of $\pi$-classes of a $\pi$-subgroup. If a $\pi$-witness for $G$ exists, we say that $G$ is $\pi$-bounded. Malle, Navarro and Robinson proved in \cite[Theorem 2]{M21} that all finite quasisimple groups are $p'$-bounded for every prime $p$. The explicit $p'$-witnesses constructed in \cite{M21} will be important ingredients in our proofs.

\small
{\renewcommand{\arraystretch}{1.2}
\begin{table}[t]
    \begin{tabular}{ccc}
    \hline
    $G$ & $\pi(G)$ & $d_{2'}(G)$ \\ \hline
    $A_5 \cong \psl{2}{4} \cong \psl{2}{5}$ & $2,3,5$ & $4/15 \approx 0.2667$\\
    $A_6 \cong \psl{2}{9}$ & $2,3,5$ & $5/45 \approx 0.1111$ \\
    $\mathrm{PSp}_4(3) \cong \mathrm{PSU}_4(2)$ & $2,3,5$ & $8/405 \approx 0.0198$\\ 
    $\psl{2}{7} \cong \psl{3}{2}$ & $2,3,7$ & $4/21 \approx 0.1905$\\
    $\psl{2}{8}$ & $2,3,7$ & $8/63 \approx 0.1270$\\
    $\mathrm{PSU}_3(3)$ & $2,3,7$ & $5/189 \approx 0.0265$\\ 
    $\psl{3}{3}$ & $2,3,13$ & $7/351 \approx 0.0199$\\ 
    $\psl{2}{17}$ & $2,3,17$ & $7/153 \approx 0.0458$\\ \hline
    \end{tabular}
    \caption{The nonabelian finite simple groups whose order is not divisible by four distinct primes.} \label{table:4primes}
\end{table}
\normalsize

\section{\texorpdfstring{$p$}{p}-solvability criteria}\label{sec:psolvableodd}
In this section, we first use the Classification of Finite Simple Groups to show that nonabelian finite simple groups with an odd prime $p$ dividing their order do not satisfy the hypothesis of Theorem \ref{thm1:odd}(1).

\begin{proposition}\label{prop:psolvableodd_simplegroups}
Let $G$ be a nonabelian finite simple group and let $p$ be an odd prime. If $p$ divides $|G|$, then $d_{p'}(G) \le 1/(p-1)$. Furthermore, equality holds if and only if $p>3$ and $G \cong \psl{2}{p}$.
\end{proposition}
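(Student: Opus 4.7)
The plan is to invoke the Classification of Finite Simple Groups and treat the three families of sporadic, alternating, and Lie type groups in turn. In each case the core strategy is to produce a $p'$-witness $H \le G$ as in \cite{M21}, giving $k_{p'}(G) \le |H|$, and then show that $(p-1)|H| \le |G|_{p'}$, with equality only when $G \cong \psl{2}{p}$.

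For the sporadic simple groups (together with the Tits group), I would verify $k_{p'}(G)(p-1) \le |G|_{p'}$ directly from the character tables in GAP for each odd prime $p$ dividing $|G|$. Since no such group is isomorphic to any $\psl{2}{p}$, the inequality is always strict. For the alternating groups $A_n$ with $n \ge p$, I would bound $k_{p'}(A_n)$ above by twice the number of partitions of $n$ all of whose parts are coprime to $p$, while $|A_n|_{p'}$ grows super-exponentially in $n$; hence the desired inequality is satisfied with wide margin once $n$ exceeds a modest threshold, and a short GAP check covers the remaining small $n$. The only equality arising in this family is $A_5 \cong \psl{2}{5}$ at $p = 5$, where $k_{5'}(A_5) = 3$ and $|A_5|_{5'} = 12$.

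For groups of Lie type I would separate the defining and cross characteristic cases. In defining characteristic $p$, a suitable maximal torus $T$ is a $p'$-witness by \cite[Theorem 2]{M21}, so $k_{p'}(G) \le |T|$. For rank one groups this reduces to explicit calculation: for $G = \psl{2}{q}$ with $q$ an odd power of $p$, the non-split torus has order $(q+1)/2$ and $|G|_{p'} = (q-1)(q+1)/2$, yielding $d_{p'}(G) \le 1/(q-1)$ with equality exactly when $q = p$; the remaining rank one families $\psu{3}{q}$ and the Suzuki and Ree groups yield strict inequality by similar direct estimates. For Lie type of rank $r \ge 2$, $|T|$ is a polynomial in $q$ of degree $r$ while $|G|_{p'}$ is a product of many cyclotomic factors $\Phi_d(q)$, yielding a large margin. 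The cross characteristic case is handled analogously using the relevant witness construction of \cite{M21}.

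The hard part will be the equality analysis. Pinning down $\psl{2}{p}$ as the unique equality case requires careful bookkeeping of torus orders in the rank one defining characteristic argument and cyclotomic estimates to rule out equality in higher rank and in cross characteristic. A handful of small Lie type groups where the generic bounds are tight, such as those appearing in Table \ref{table:4primes} (whose $\pi$-sets are small), would need to be dispatched individually by direct GAP character table computation.
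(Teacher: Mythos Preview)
Your overall architecture matches the paper's: partition via CFSG and exploit the $p'$-witnesses of \cite{M21} to bound $k_{p'}(G)$. The sporadic and defining-characteristic Lie type treatments are essentially what the paper does (the paper organizes the defining-characteristic torus argument via a table of cyclotomic factors $k>p$ dividing $[G:T]_{p'}$, but the underlying idea is yours). For alternating groups you take a different route, bounding $k_{p'}(A_n)$ by a partition count; the paper instead invokes \cite[Corollary~3.11]{M21} to get a $2$-subgroup witness for $n\ge 10$ and then only needs $p|G|_2<|G|_{p'}$, which is cleaner but either approach works.

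There are two genuine gaps. First, the cross-characteristic case is not ``handled analogously'': the relevant witness from \cite{M21} is a unipotent ($r$-)subgroup, and \cite[Theorem~2.9]{M21} explicitly excludes $\psl{2}{q}$, $\psl{3}{q}$, $\psu{3}{q}$ and $\psu{4}{2}$. For these groups no $p'$-witness of the expected size is available, and the paper instead works directly with $k(G)$ and detailed factorizations of $|G|_{r'}$ (its Proposition~3.7 runs several pages). Your one-line dismissal hides most of the actual labour, including the equality case $\psl{3}{2}\cong\psl{2}{7}$ at $p=7$ that surfaces here. Second, your equality analysis for $\psl{2}{p}$ is incomplete: the torus witness only gives $k_{p'}(G)\le (q+1)/2$, hence $d_{p'}(G)\le 1/(q-1)$. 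To conclude that equality \emph{holds} at $q=p$ you must know $k_{p'}(\psl{2}{q})=(q+1)/2$ exactly (the number of irreducible $p$-modular representations), which the paper invokes explicitly. A minor slip: Suzuki groups ${}^2\mr{B}_2$ exist only in characteristic~$2$, so they do not appear in the odd-$p$ defining-characteristic case.
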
  

We will prove Proposition \ref{prop:psolvableodd_simplegroups} using the proof strategies in the following remarks.
\begin{remark}\label{remark:1}
If $G$ has a $p'$-witness $H \le G$ with $p|H|<|G|_{p'}$, then we may proceed using the following inequalities:
\begin{linenomath}\begin{align*}
    k_{p'}(G) \le k(H) \le |H| < \frac{p \, |H|}{p-1} < \frac{|G|_{p'}}{p-1}.
\end{align*}\end{linenomath}
Dividing by $|G|_{p'}$ yields $d_{p'}(G)<1/(p-1)$, as desired. The inequality $p|H|<|G|_{p'}$ can be proved by examining order formulas for the nonabelian finite simple groups.
\end{remark} 

The next remark describes a computational strategy that we will apply to the sporadic groups and some small alternating groups.

\begin{remark}\label{remark:2}
In order to prove that a nonabelian finite simple group $G$ satisfies Proposition \ref{prop:psolvableodd_simplegroups}, it suffices to show that $d_2(G) < 1/(\widetilde{p}-1)$, where $\widetilde{p}$ is the largest prime dividing $|G|$. Then for each odd prime $p$ dividing $|G|$, we have 
\begin{linenomath}\begin{align*}
    d_{p'}(G) \le d_2(G) < 1/(\widetilde{p}-1) \le 1/(p-1),
\end{align*}\end{linenomath}
where the first inequality follows from Lemma \ref{lemma:1}(2) since $G$ nonabelian simple means $2 \in p'$. The GAP code for this computational strategy is given in Section \ref{sec:calculations}.
\end{remark}

Note that the arguments contained in Remarks \ref{remark:1} and \ref{remark:2} show that $d_{p'}(G)< 1/(p-1)$. We will easily be able to keep track of those groups $G$ that saturate the bound $d_{p'}(G)=1/(p-1)$ since they will require a different method of proof.

We now consider each family of nonabelian finite simple groups in turn. We begin with the alternating groups.

\begin{proposition} \label{prop:alternating}
If $G$ is a simple alternating group $A_n$ with $n \ge 5$ and $p$ is an odd prime dividing $|G|$, then $d_{p'}(G) \le 1/(p-1)$. Furthermore, equality holds if and only if $p=5$ and $G = A_5 \cong \psl{2}{5}$.
\end{proposition}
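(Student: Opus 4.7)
The plan is to combine the two strategies from Remark~\ref{remark:1} and Remark~\ref{remark:2}, splitting according to the size of $n$ relative to $p$: a generic witness argument handles all sufficiently large $n$, while a finite computation handles the remainder.

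For the generic case in which $n$ is suitably larger than $p$, I would apply the $p'$-witness strategy of Remark~\ref{remark:1}. An appropriate $p'$-witness $H \le A_n$ is guaranteed by \cite[Theorem 2]{M21}, which constructs explicit $p'$-witnesses for every finite quasisimple group. The key step is then to verify the inequality $p|H| < |A_n|_{p'}$, after which Remark~\ref{remark:1} yields the strict bound $d_{p'}(A_n) < 1/(p-1)$. To estimate $|A_n|_{p'} = n!/(2\,|A_n|_p)$ from below, I would use Legendre's formula $|A_n|_p = p^{(n - s_p(n))/(p-1)}$, where $s_p(n)$ denotes the base-$p$ digit sum of $n$; since $|A_n|_{p'}$ grows essentially factorially in $n$ while the order of the witness from \cite{M21} grows much more slowly for fixed $p$, the inequality holds for all $n \ge n_0(p)$, with $n_0(p)$ an explicit threshold determined from the estimate.

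For the remaining finitely many pairs with $5 \le n < n_0(p)$, I would proceed by direct GAP computation following Remark~\ref{remark:2}. This step both disposes of the exceptional cases and isolates the equality case: one checks that $k_{5'}(A_5) = 3$ and $|A_5|_{5'} = 12$, so $d_{5'}(A_5) = 1/4 = 1/(5-1)$, realizing equality, whereas for every other pair $(A_n, p)$ in the computational range the strict inequality $d_{p'}(A_n) < 1/(p-1)$ is confirmed directly. Since the witness argument of Remark~\ref{remark:1} and the $d_2(G)$ argument of Remark~\ref{remark:2} both yield strict inequalities, equality can only arise in the finite computational range, and GAP singles out $(A_5, p=5)$ as the unique equality case.

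The main obstacle will be executing the witness estimate cleanly when $p$ is close to $n$. In that regime $|A_n|_{p'}$ is smallest --- indeed, when $p = n$ one has $|A_p|_p = p$, so $|A_p|_{p'} = (p-1)!/2$ --- and the slack available in the inequality $p|H| < |A_n|_{p'}$ is minimal, so the witness from \cite{M21} must be chosen carefully and $n_0(p)$ may need to be enlarged to absorb these boundary cases into the computational range. No conceptual difficulty is expected, only bookkeeping to ensure the generic estimate and the GAP check together cover every pair $(n,p)$ with $n \ge 5$ and $p$ an odd prime dividing $|A_n|$.
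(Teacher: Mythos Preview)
Your two-case split---a witness argument for large $n$ and direct computation for small $n$---is exactly the paper's structure, but there is a genuine gap in how you set up the threshold. You let $n_0(p)$ depend on $p$, and you never argue that the residual set $\{(n,p): p\le n< n_0(p)\}$ is finite. Your own diagnosis of the hard case is $n=p$; if you ``enlarge $n_0(p)$ to absorb these boundary cases into the computational range,'' then that range contains $(A_p,p)$ for every prime $p$, and your GAP step is an infinite computation. The growth heuristic ``$|A_n|_{p'}$ grows factorially in $n$ while $|H|$ grows much more slowly for fixed $p$'' is correct for each fixed $p$ but gives no uniformity over $p$, and Legendre's formula by itself will not supply it; you need control on $|H|$ that is uniform in $p$.

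The paper closes this gap by being specific about the witness. For $n\ge 10$ and $2\in\pi$, the proof of \cite[Corollary~3.11]{M21} actually produces a $2$-subgroup as $\pi$-witness, so with $\pi=p'$ one has $|H|\le |A_n|_2$. The inequality $p\,|H|<|A_n|_{p'}$ then reduces to $p<|A_n|_{\{2,p\}'}$, which is handled uniformly in $p$: either some prime $q>p$ divides $|A_n|$ and one is done, or $p$ is the largest prime $\le n$, in which case $n\ge 10$ forces $p\ge 7$ and the odd $p'$-numbers $p-2,\,p-4$ satisfy $(p-2)(p-4)>p$ and divide $|A_n|_{\{2,p\}'}$. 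This gives the absolute threshold $n_0=10$, independent of $p$, leaving only $5\le n\le 9$ for computation---genuinely finitely many pairs. Your plan becomes correct once you commit to the $2$-subgroup witness and replace the Legendre estimate with this short uniform argument.
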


\begin{proof}
Let $G = A_n$ with $n \ge 5$ and let $p$ be an odd prime dividing $|G|$. By Theorem \cite[Theorem 1]{M21}, $G$ is $\pi$-bounded for any set of primes $\pi$. In fact, the authors show in the proof of \cite[Corollary 3.11]{M21} that if $2 \in \pi$ and $n \ge 10$, then there exists a $\pi$-witness that is a $2$-subgroup of $G$. So assume first that $n \ge 10$. Since $p$ is odd, we have $2 \in p'$ and we may choose a 2-subgroup $H \le G$ as a $p'$-witness. Then $|H| \le |G|_2$ and it suffices to show that $p |G|_2 < |G|_{p'}$ by Remark \ref{remark:1}. If there exists a prime $q>p$ dividing $|G|$, then $p|G|_2 < q |G|_2 \le |G|_{p'}$. So we may assume that $p$ is the largest prime dividing $|G|=n!/2$. Since $n \ge 10$, this means $p \ge 7$. Then $p-2$ and $p-4$ are odd factors of $|G|$ which are coprime to $p$ and $(p-2)(p-4)>p$. Hence, $p|G|_2 < (p-2)(p-4)|G|_2 \le |G|_{p'}$, as desired. For $5 \le n<10$, we compute that $d_{p'}(A_n) \le 1/(p-1)$, with equality if and only if $p=5=n$. The calculations are given in Section \ref{sec:calculations}.
\end{proof}

Next we consider the simple sporadic groups and the Tits group ${}^2 \mr{F}_4(2)'$.

\begin{proposition}\label{prop:sporadic}
If $G$ is a sporadic simple group or the Tits group ${}^2 \mr{F}_4(2)'$ and $p$ is an odd prime dividing $|G|$, then $d_{p'}(G)<1/(p-1)$.
\end{proposition}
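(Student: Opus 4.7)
The plan is to apply the computational strategy of Remark \ref{remark:2}. For each of the 26 sporadic simple groups together with the Tits group $G = {}^2\mathrm{F}_4(2)'$, I will verify that
\begin{linenomath}\begin{align*}
    d_2(G) < \frac{1}{\widetilde{p} - 1},
\end{align*}\end{linenomath}
where $\widetilde{p}$ denotes the largest prime dividing $|G|$. Once this is established, Remark \ref{remark:2} immediately yields $d_{p'}(G) < 1/(p-1)$ for every odd prime $p$ dividing $|G|$, since $2 \in p'$ forces $d_{p'}(G) \le d_2(G)$ by Lemma \ref{lemma:1}(2), and $\widetilde{p} \ge p$.

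To carry this out, I would use the GAP \cite{GAP4} character table library, which contains the character tables of all sporadic simple groups and the Tits group. For each such $G$, the character table provides the order of each conjugacy class representative, from which $k_2(G)$ (the number of classes whose representatives have $2$-power order) and $|G|_2$ can be read off directly. The largest prime divisor $\widetilde{p}$ is likewise immediate from the factorization of $|G|$. The verification then reduces to checking the single integer inequality
\begin{linenomath}\begin{align*}
    k_2(G)\,(\widetilde{p} - 1) < |G|_2
\end{align*}\end{linenomath}
for each of the 27 groups in the list. I would include this GAP routine in Section \ref{sec:calculations}, patterned on the code referenced in Remark \ref{remark:2}.

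Since the list of sporadic groups (and the Tits group) is finite and each group's data is already stored in the GAP library, there is no serious obstacle; the work is simply to run the check and record the outcome. The only mild point to note is that for groups with a small Sylow $2$-subgroup relative to $\widetilde{p}$ — for instance, groups where $\widetilde{p}$ is quite large compared to $|G|_2$ — one must confirm that $k_2(G)$ is genuinely small enough for the inequality to hold. In all cases this can be checked directly, and no equality arises, giving the strict inequality $d_{p'}(G) < 1/(p-1)$ as claimed.
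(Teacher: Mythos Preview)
Your plan has a genuine gap: the inequality $d_2(G) < 1/(\widetilde{p}-1)$ from Remark~\ref{remark:2} does \emph{not} hold for all 27 groups. It fails for $M_{11}$, $M_{12}$, $J_1$, and $Ly$. For example, $M_{11}$ has $k_2(M_{11})=5$ and $|M_{11}|_2=2^4=16$, so $d_2(M_{11})=5/16=0.3125$, while $\widetilde{p}=11$ gives $1/(\widetilde{p}-1)=0.1$; similarly $J_1$ has $d_2(J_1)=2/8=0.25$ against $1/18\approx 0.056$, and $Ly$ has $d_2(Ly)=5/256\approx 0.0195$ against $1/66\approx 0.0152$. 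Your hedge about groups with small Sylow $2$-subgroup relative to $\widetilde{p}$ correctly identifies where the danger lies, but your assertion that ``in all cases this can be checked directly'' is simply false for these four.

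The paper handles this by first running the Remark~\ref{remark:2} test (or the even coarser bound $k(G)/|G|_2$) across all 27 groups, noting the four failures, and then for each of $M_{11}$, $M_{12}$, $J_1$, $Ly$ computing $d_{p'}(G)$ directly for every odd prime $p$ dividing $|G|$ and verifying $d_{p'}(G)<1/(p-1)$ case by case. You need to add this second layer of computation to complete the argument.
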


\begin{proof}
The claim follows by computing in GAP. The calculations are in Section \ref{sec:calculations}.
\end{proof}

We now address the finite simple groups of Lie type. The proof is divided into two fundamentally different cases depending on whether the odd prime $p$ is the defining characteristic. Our reference for the finite groups of Lie type is \cite{M11}, especially the order formulas given there in Corollary 24.6, Table 24.1 and Table 24.2.

\small
{\renewcommand{\arraystretch}{1.2}
\begin{table}[ht]
\centerline{
    \begin{tabular}{|c|c|c||c|c|c|} 
\hline 
$\mc{G}^F$  
& $|T|$
& $k$
& $\mc{G}^F$  
& $|T|$  
& $k$ \\ \hline

\makecell{$\sl{n}{q}$\\ $n\ge 3$} & $(q^n-1)/(q-1)$  & $\Phi_2(q)$ &
$\mr{F}_4(q)$  & $\Phi_2(q)^4$ &  $\Phi_6(q)$  \\ \hline

\makecell{$\su{n}{q}$ \\ $n\ge 3$} & $\Phi_2(q)^{n-1}$ &  $\Phi_6(q)$ &
$\mr{E}_6(q)$  & $\Phi_3(q)^3$  &  $\Phi_6(q)$  \\ \hline

\makecell{$\mr{Spin}_{2n+1}(q)$ \\ $n \ge 2$} & $\Phi_2(q)^n$&  $\Phi_4(q)$ &  
${}^2 \mr{E}_6(q)$ & $\Phi_2(q)^6$  &  $\Phi_6(q)$  \\ \hline

\makecell{$\sp{2n}{q}$ \\ $n \ge 2$} & $\Phi_2(q)^n$ &  $\Phi_4(q)$ & 
$\mr{E}_7(q)$  & $\Phi_2(q)^7$ &  $\Phi_6(q)$  \\ \hline

\makecell{$\mr{Spin}_{2n}^+(q)$ \\ $n \ge 4$} & $\Phi_4(q)(q^{n-2}+1)$ & $\Phi_2(q)$ &
$\mr{E}_8(q)$  & $\Phi_2(q)^8$ &  $\Phi_8(q)$    \\ \hline

\makecell{$\mr{Spin}_{2n}^-(q)$ \\ $n \ge 4$} & \makecell{$\Phi_2(q)^n$, $n$ odd \\ $\Phi_2(q)^{n-2}\Phi_4(q)$, $n$ even} & $\Phi_6(q)$ &
\makecell{${}^2 \mr{G}_2(q^2)$ \\ $q^2=3^{2f+1}$ \\ $f \ge 1$} & $\Phi_6(q^2)^+$ & $\Phi_1(q^2)$ \\ \hline 

$\mr{G}_2(q)$ & $\Phi_2(q)^2$  & $\Phi_6(q)$ & 
${}^3 \mr{D}_4(q)$ & $\Phi_3(q)^2$  & $\Phi_6(q)$ \\ \hline
\end{tabular}
}
\caption{Large tori in finite groups of Lie type in characteristic $p$ whose order bounds the number of $p'$-conjugacy classes. The $p'$-numbers $k>p$ divide $[G:H]_{p'}$, where $G=\mc{G}^F/Z$, $H=T/Z$ and $Z=\Zen{\mc{G}^F}$.}\label{table:tori}
\end{table}}
\normalsize

\begin{proposition} \label{prop:liedefining}
If $G$ is a finite simple group of Lie type in odd characteristic $p$, then $d_{p'}(G) \le 1/(p-1)$. Furthermore, equality holds if and only if $p>3$ and $G = \psl{2}{p}$.
\end{proposition}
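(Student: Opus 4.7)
The plan is to split into two cases: (i) the family $\psl{2}{q}$ with $q$ a power of the odd prime $p$, which contains the equality case and must be handled directly; and (ii) all other finite simple groups of Lie type in characteristic $p$, for which Table \ref{table:tori} supplies a torus-based $p'$-witness that can be fed into Remark \ref{remark:1}.

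For case (i), recall from the character theory of $\psl{2}{q}$ that for odd $q$ one has $k(\psl{2}{q}) = (q+5)/2$, and the non-identity $p$-elements split into exactly two conjugacy classes (the two classes of unipotent elements in $\sl{2}{q}/\{\pm I\}$). Hence $k_{p'}(\psl{2}{q}) = (q+1)/2$, while $|\psl{2}{q}|_{p'} = (q^2-1)/2$, giving
\begin{linenomath}\begin{equation*}
    d_{p'}(\psl{2}{q}) \;=\; \frac{(q+1)/2}{(q^2-1)/2} \;=\; \frac{1}{q-1}.
\end{equation*}\end{linenomath}
Since $\psl{2}{q}$ is simple only for $q \ge 4$, and $q$ is a power of the odd prime $p$, the bound $1/(q-1) \le 1/(p-1)$ holds, with equality iff $q = p$. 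When $p = 3$ the smallest admissible value is $q = 9$, which yields $1/8 < 1/2$, ruling out equality. Hence equality forces $p > 3$ and $G = \psl{2}{p}$, matching the statement.

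For case (ii), let $G = \mathcal{G}^F/Z$ be any of the remaining families listed in Table \ref{table:tori}, with torus $T$ and witness $H = T/Z$. Being a torus in characteristic $p$, $T$ has $p'$-order, hence so does $H$. By \cite[Theorem 2]{M21} (and the concrete construction of $p'$-witnesses in loc.~cit.) such an $H$ can be chosen as a $p'$-witness for $G$, so $k_{p'}(G) \le k(H) \le |H|$. The listed integer $k$ in Table \ref{table:tori} is a $p'$-divisor of $[G:H]_{p'}$; combined with the elementary estimate $k > p$ (which one checks in each row from the cyclotomic expressions $\Phi_2(q) = q+1$, $\Phi_4(q) = q^2+1$, $\Phi_6(q) = q^2-q+1$, $\Phi_3(q) = q^2+q+1$, $\Phi_8(q) = q^4+1$, $\Phi_1(q^2) = q^2-1$, and the Ree-group factor $\Phi_6(q^2)^+$, using $q \ge p \ge 3$), one obtains
\begin{linenomath}\begin{equation*}
    p\,|H| \;<\; k\,|H| \;\le\; |G|_{p'}.
\end{equation*}\end{linenomath}
Remark \ref{remark:1} then yields the strict inequality $d_{p'}(G) < 1/(p-1)$.

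The main obstacle is the row-by-row bookkeeping in case (ii): for each family one must verify that the claimed torus $T$ gives rise to a genuine $p'$-witness in the sense of \cite{M21}, that the stated $k$ really divides $[G:H]_{p'}$ (which one reads off from the order formulas in \cite[Corollary 24.6, Tables 24.1, 24.2]{M11}), and that $k > p$. The strict inequality $k > p$ is essentially automatic from the cyclotomic formulas for $q \ge 3$, but the Ree family ${}^2G_2(q^2)$ needs the special factorization $\Phi_6(q^2) = (q^2 - q\sqrt{3} + 1)(q^2 + q\sqrt{3} + 1)$ in characteristic $3$, with $\Phi_6(q^2)^+$ denoting one of these factors; this is the only subtle verification. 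No further case analysis is needed because the combination of case (i) and case (ii) exhausts the finite simple groups of Lie type in odd characteristic, and equality is pinned down uniquely to $G = \psl{2}{p}$ with $p > 3$.
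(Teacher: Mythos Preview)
Your proposal is correct and follows essentially the same route as the paper: the $\psl{2}{q}$ family is handled by the explicit computation $d_{p'}(\psl{2}{q})=1/(q-1)$, and all remaining types are dispatched via the torus-based $p'$-witness of \cite[Proposition~4.3]{M21} together with the integer $k$ from Table~\ref{table:tori} and the argument of Remark~\ref{remark:1}. The only cosmetic difference is that the paper cites Proposition~4.3 of \cite{M21} rather than Theorem~2, and in the Ree row it is $\Phi_1(q^2)=q^2-1$ (not $\Phi_6(q^2)^+$, which is the torus order) that plays the role of $k$; but you already list $\Phi_1(q^2)$ among the expressions to be checked, so this is harmless.
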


\begin{proof}
Let $G$ be a finite simple group of Lie type in odd characteristic $p$, and let $q$ be a power of $p$. We have $G =\mc{G}^F/\Zen{\mc{G}^F}$ for some simple algebraic group $\mc{G}$ of simply connected type and $F:\mc{G}\rightarrow \mc{G}$ a Steinberg endomorphism. We do not consider types ${}^2 \mr{B}_2$ or ${}^2 \mr{F}_4$ because they occur only in even characteristic. By \cite[Proposition 4.3]{M21} and its proof, $G$ has a $p'$-witness $H$ of the form $H = T/Z$ for some torus $T \ge Z$ of $\mc{G}^F$ and $Z=\Zen{\mc{G}^F}$. By Remark \ref{remark:1}, it suffices to find a $p'$-number $k > p$ dividing $[G:H]_{p'}=[\mc{G}^F:T]_{p'}$, since then $p |H| < k |H| \le |G|_{p'}$. In Table \ref{table:tori}, we give such an integer $k$ for all types except $G = \psl{2}{q} \cong \psu{2}{q}$. We address this case in the next paragraph. The results in Table \ref{table:tori} are expressed in terms of cyclotomic polynomials. Let $\Phi_n(x)$ be the $n$-th cyclotomic polynomial, so $x^n-1 = \prod_{d \mid n} \Phi_d(x)$. In particular,
\begin{linenomath}\begin{align*}
    \Phi_1(x)=x-1, \quad \Phi_2(x)=x+1, \quad \Phi_3(x)=x^2+x+1, \\
    \Phi_4(x)=x^2+1, \quad \Phi_6(x)=x^2-x+1, \quad \Phi_8(x)= x^4+1.
\end{align*}\end{linenomath}
We also define the so-called semicyclotomic polynomials $\Phi_6^\pm(x)=x \pm \sqrt{3x}+1$. Note that $\Phi_6(x)=\Phi_6^+(x) \Phi_6^-(x)$. In Table \ref{table:tori}, we list the order of the torus $T \le \mc{G}^F$ given in the proof of \cite[Proposition 4.3]{M21} for the reader's convenience. Note that the factor $k$ of $|\mc{G}^F|$ survives upon dividing $|\mc{G}^F|$ by $|T|$. Furthermore, $\Phi_n(q)>q \ge p$ for $n=2,4,6,8$ and $\Phi_1(q^2)> 3$ when $q^2=3^{2f+1}$ with $f \ge 1$. Therefore, the $p'$-number $k$ divides $[G:H]_{p'}$ and exceeds $p$, as desired.

It remains to consider $G = \psl{2}{q}$. In this case, $|G| =q(q+1)(q-1)/2$ and it is well-known that $G$ has $k_{p'}(G)=(q+1)/2$ inequivalent irreducible modular representations in defining characteristic. Therefore, $d_{p'}(G) = 1/(q-1) \le 1/(p-1)$, with equality if and only if $q=p$. If we have equality, then $p>3$ since $\psl{2}{3} \cong A_4$ is solvable.
\end{proof}

Now we turn to the finite simple groups of Lie type in cross characteristic. First we address those cases in which there exists a unipotent $p'$-witness.

\begin{proposition} \label{prop:liecrosswitness}
Let $G$ be a finite simple group of Lie type in characteristic $r$, and suppose $G$ is not one of $\psl{2}{q}$, $\psl{3}{q}$, $\psu{3}{q}$ or $\psu{4}{2} \cong \psp{4}{3}$, where $q$ is a power of the prime $r$. If $p$ is an odd prime dividing $|G|$ and $p\neq r$, then $d_{p'}(G) < 1/(p-1)$.
\end{proposition}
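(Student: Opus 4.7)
The plan is to invoke Remark \ref{remark:1} with $H \le G$ taken to be the unipotent $p'$-witness for $G$ constructed in the proof of \cite[Theorem 2]{M21}; since $p \neq r$, any unipotent subgroup of $G$ is automatically a $p'$-subgroup, so such $H$ makes sense. Writing $|G| = q^N \prod_d \Phi_d(q)^{n_d}$, where $q$ is a power of $r$ and $N$ is the number of positive roots, the witness $H$ satisfies $|H| \le |G|_r = q^N$. Therefore
\[
|G|_{p'}/|H| \;\ge\; |G|_{p'}/|G|_r \;=\; \bigl(\textstyle\prod_d \Phi_d(q)^{n_d}\bigr)_{p'},
\]
and it suffices to exhibit a $p'$-number $k > p$ dividing this quantity; then $p|H| < k|H| \le |G|_{p'}$ and Remark \ref{remark:1} yields $d_{p'}(G) < 1/(p-1)$.

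For each Lie type appearing in the statement, the task is then to identify suitable cyclotomic factors $\Phi_d(q)$ of $|G|_{r'}$ whose product, after removing the $p$-part, exceeds $p$. A prime $p$ divides $\Phi_d(q)$ only when the multiplicative order of $q$ modulo $p$ is $d$, up to a single possible factor of $p$ coming from the $p \mid d$ case, so distinct $\Phi_d(q)$ share very little $p$-content, and one can usually single out some $d$ for which $\Phi_d(q)$ is entirely coprime to $p$. For the classical groups $\psp{2n}{q}$ with $n \ge 2$, $\mr{P\Omega}^\pm_{2n}(q)$ with $n \ge 4$, $\psl{n}{q}$ with $n \ge 4$, and $\psu{n}{q}$ with $n \ge 4$ (excluding $\psu{4}{2}$), together with all exceptional types, there are sufficiently many available cyclotomic factors $\Phi_d(q)$ dividing $|G|$ that the choice can always be made. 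I would organize the case analysis by Lie type in a table analogous to Table \ref{table:tori}, listing for each family the factor $k$ to be used and its size estimate. A Zsigmondy-type argument, combined with the elementary bound $\Phi_d(q) \ge q-1$ and the observation that a primitive prime divisor of $q^d-1$ divides no $\Phi_{d'}(q)$ with $d' < d$, will secure the required size in each family.

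The main obstacle is the borderline situation in which $p$ divides the dominant cyclotomic factors of $|G|_{r'}$ with high multiplicity, so that the available $p'$-part can shrink below $p$. This is exactly what happens in the excluded families $\psl{2}{q}$, $\psl{3}{q}$, $\psu{3}{q}$ and in the small outlier $\psu{4}{2} \cong \psp{4}{3}$, where the cyclotomic inventory ($\Phi_1 \Phi_2$ for $\psl{2}{q}$, $\Phi_1 \Phi_2 \Phi_3$ for $\psl{3}{q}$, and $\Phi_1 \Phi_2 \Phi_6$ for $\psu{3}{q}$, respectively) is too thin to absorb a large $p$-part and still leave a $p'$-factor exceeding $p$; these cases will be handled in a subsequent proposition by a more hands-on argument. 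Even for the included families, one must occasionally switch between different choices of $\Phi_d(q)$ depending on the multiplicative order of $q$ modulo $p$, and verifying that some valid choice always exists constitutes the technical heart of the proof.
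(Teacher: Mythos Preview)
Your proposal is correct and follows essentially the same route as the paper: both invoke the unipotent $r$-subgroup witness from \cite[Theorem~2.9]{M21}, reduce via Remark~\ref{remark:1} to exhibiting a $p'$-divisor of $|G|_{r'}$ exceeding $p$, and then carry out a case analysis across Lie types. The paper's execution differs only in organization: rather than tabulating by Lie type and invoking Zsigmondy or multiplicative orders, it groups families by the shape of the largest factor of $|G|_{r'}$ (those with top factor $q^{2k}-1$, then $\psl{n}^\epsilon$ with $n\ge 5$ odd, then the Suzuki/Ree/triality types) and uses direct factorizations such as $q^{2k}-1=(q^k-1)(q^k+1)$ to isolate a $p'$-piece, with each twisted type handled by an explicit ad hoc computation.
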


\begin{proof}
Let $G$ be a finite simple group of Lie type in characteristic $r$ that is not one of the exceptions listed in the statement of the proposition. Then $G = \mc{G}^F/\Zen{\mc{G}^F}$ for some simple algebraic group $\mc{G}$ of simply connected type equipped with a Steinberg endomorphism $F:\mc{G} \rightarrow \mc{G}$. Let $p \neq r$ be an odd prime dividing $|G|$. By \cite[Theorem 2.9]{M21}, there exists an $r$-subgroup $H \le G$ such that $k(G) \le k(H)$. Then $k_{p'}(G) \le k(G) \le k(H) \le |H| \le |G|_r$ and it suffices to show that $ p |G|_r < |G|_{p'}$ by Remark \ref{remark:1}. Now, if $\mc{G}$ has rank $s$, then
\begin{linenomath}\begin{align*}
    |G|_{r'} = \frac{1}{|\Zen{\mc{G}^F}|}\prod_{i=1}^s (q^{d_i}-\epsilon_i)
\end{align*}\end{linenomath}
for some positive integers $d_i$, roots of unity $\epsilon_i$, and $q$ a power of $r$. To prove the proposition, it suffices to find a $p'$-number dividing $|G|_{r'}$ that is greater than $p$. 

Choose a labeling for the $d_i$ such that $d_i\le d_j$ if $i \le j$.

(1) Suppose $\epsilon_i = \pm 1$ for all $i=1,\dots, s$ and $q^{d_s}-\epsilon_s=q^{2k}-1$ for some $k>1$. 

By examining the order formulas for the finite simple groups of Lie type, this means $G$ is one of the following groups:
\begin{enumerate}[nolistsep,label=(\roman*)]
    \item $\psl{n}{q}$ or $\psu{n}{q}$ with $n$ even. By our assumptions, $n \ge 4$.
    \item $\psp{2n}{q}$ with $n \ge 2$, $\mr{P}\Omega_{2n+1}(q)$ with $n\ge 3$, $\mr{P}\Omega_{2n}^+(q)$ with $n \ge 4$ or $\mr{P}\Omega_{2n}^-(q)$ with $n\ge 4$.
    \item $\mr{G}_2(q)$, $\mr{F}_4(q)$, $\mr{E}_8(q)$, or one of $\mr{E}_6(q)$, ${}^2 \mr{E}_6(q)$, $\mr{E}_7(q)$ modulo its center.
\end{enumerate}
Since $\epsilon_i = \pm1$, $d_i < d_j$ implies that $q^{d_i}-\epsilon_i \le q^{d_j}-\epsilon_j$. We may assume that $p$ divides the largest factor $q^{d_s}-\epsilon_s=q^{2k}-1$ of $|G|_{r'}$. If not, then this is a $p'$-number greater than $p$ and we are done. Since $q^{2k}-1=(q^k-1)(q^k+1)$, we may assume that $p$ divides $q^k+1$ by the same reasoning. Thus, $q^k-1$ is a $p'$-number. Since $d_1=2$ and $|\Zen{\mc{G}^F}|\le q+1$ for all types, we have $z:=(q^2-1)/|\Zen{\mc{G}^F}| > 1$ unless $\mc{G}^F={}^2 \mr{E}_6(2)$. In this case, take $z=2^6-1$. Then $z(q^k-1)$ divides $|G|_{r'}$. We may assume that $p \nmid z$, since otherwise $p \le z \le q^k-1$ and we are done. Thus, $z(q^k-1)$ is a $p'$-number exceeding $q^k+1 \ge p$. This case is finished.

(2) Suppose $G$ is $\psl{n}{q}$ or $\psu{n}{q}$ with $n \ge 5$ odd. 

Assume first that $G = \psl{n}{q}$ with $n \ge 5$ odd. Again, we may assume that $p$ divides the largest factor $q^n-1$ of $|G|_{r'}$. We may also assume that $p$ does not divide $q^{n-1}-1$, for if $p \mid (q^{n-1}-1)$, we may finish using the argument in (1) since $n-1$ is even. Finally, we may assume that $p \nmid (q^{n-2}-1)$; otherwise, $q^{n-1}-1$ is a $p'$-number larger than $p$ and we are done. Thus, $(q^{n-1}-1)(q^{n-2}-1)$ is a $p'$-number larger than $q^n-1 \ge p$, which is what we wanted to find. 

If $G = \psu{n}{q}$, the argument follows in exactly the same way: We may assume that $p$ divides the largest factor $q^n-(-1)^n=q^n+1$ and reduce to finding a $p'$-number $(q^{n-1}-1)(q^{n-2}+1)$ exceeding $p$.

(3) Suppose $G$ is one of the remaining nonabelian finite simple groups of Lie type; namely, ${}^2 \mr{B}_2(q^2)$, ${}^2 \mr{G}_2(q^2)$, ${}^3 \mr{D}_4(q)$ or ${}^2 \mr{F}_4(q^2)$. 

Consider $G = {}^2 \mr{B}_2(q^2)$. Then $r=2$, $q^2=2^{2f+1}$ with $f \ge 1$, and
\begin{linenomath}\begin{align*}
    |G|_{r'} = (q^2-1)(q^4+1)= (q^2-1)(q^2-\sqrt{2q^2}+1)(q^2+\sqrt{2q^2}+1).
\end{align*}\end{linenomath}
We may assume that $p$ divides the largest factor $q^2+\sqrt{2q^2}+1$. Then $p$ cannot divide $q^2-\sqrt{2q^2}+1$ as well, since otherwise $p$ divides their difference $2\sqrt{2q^2}$, contradicting that $p\neq r$. Now, if $p$ divides $q^2-1$, then $p$ divides 
\begin{linenomath}\begin{align*}
    (q^2+\sqrt{2q^2}+1) + (q^2-1) &= 2q^2+\sqrt{2q^2} = 2^{f+1}(2^{f+1}+1) \quad \text{and} \\
    (q^2+\sqrt{2q^2}+1) - (q^2-1) &= \sqrt{2q^2}+2 = 2(2^{f}+1).
\end{align*}\end{linenomath}
Then since $p$ is odd, $p$ divides $(2^{f+1}+1)-(2^f+1)=2^f$, a contradiction. Thus, $q^2-1$ is a $p'$-number and $(q^2-1)(q^2-\sqrt{2q^2}+1)$ is a $p'$-number exceeding $q^2+\sqrt{2q^2}+1 \ge p$.

Consider $G = {}^2 \mr{G}_2(q^2)$. Then $r=3$, $q^2 = 3^{2f+1}$ with $f \ge 1$, and
\begin{linenomath}\begin{align*}
    |G|_{3'} &= (q^2-1)(q^6+1) \\
    &= (q^2-1) (q^2+1) (q^2-\sqrt{3q^2}+1)(q^2+\sqrt{3q^2}+1).
\end{align*}\end{linenomath}
We may assume that $p$ divides the largest factor $q^2+\sqrt{3q^2}+1$. Then $p$ cannot divide $q^2-\sqrt{3q^2}+1$ as well, since otherwise $p$ divides their difference $2\sqrt{3q^2}$, contradicting that $p$ is odd and $p\neq r$. For the same reason, $q^2+1$ is a $p'$-number. So $(q^2+1)(q^2-\sqrt{3q^2}+1)$ is a $p'$-number exceeding $q^2+\sqrt{3q^2}+1 \ge p$.

Consider $G = {}^3 \mr{D}_4(q)$ with $q$ a power of $r$ and
\begin{linenomath}\begin{align*}
    |G|_{r'} &= (q^2-1)(q^6-1)(q^8+q^4+1) \\
    &= (q-1)^2 (q+1)^2 (q^2-q+1)^2 (q^2+q+1)^2 (q^4-q^2+1).
\end{align*}\end{linenomath}
We may assume that $p$ divides the largest factor $q^4-q^2+1$. If $p$ divides $q^2+q+1$, then $p \nmid (q+1)$; otherwise, we obtain the contradiction $p \mid q^2$. Then $(q+1)^2$ is a $p'$-number greater than $q^2+q+1 \ge p$ and we are done. So we may assume $(q^2+q+1)^2$ is a $p'$-number, and it is greater than $q^4-q^2+1 \ge p$.

Finally, consider $G = {}^2 \mr{F}_4(q)$. Then $r=2$, $q^2=2^{2f+1}$ with $f \ge 1$, and
\begin{linenomath}\begin{align*}
    |G|_{2'} &= (q^2-1)(q^6+1)(q^8-1)(q^{12}+1) \\
    &=(q^2-1)^2 (q^2+1)^2 (q^4-q^2+1)(q^4+1)^2(q^8-q^4+1).
\end{align*}\end{linenomath}
We may assume that $p$ divides the largest factor $q^8-q^4+1$. If $p$ divides $(q^4+1)^2$, then $p$ divides $(q^4+1)^2-(q^8-q^4+1)=3q^4$, which means $p=3$ since $p \neq r$. Since $G$ is simple, $|G|$ is divisible by at least three distinct primes. So there certainly exists a prime greater than 3 dividing $|G|_{2'}$ and we are done in this case. If $p$ does not divide $(q^4+1)^2$, then $(q^4+1)^2$ is a $p'$-number exceeding $q^8-q^4+1 \ge p$.
\end{proof}

The remaining cases to consider are the simple groups of Lie type in cross characteristic without a unipotent $p'$-witness. Since $k_{p'}(G) \le k(G)$, it suffices to show that $p \, k(G) < |G|_{p'}$ by a slight modification of the argument described in Remark \ref{remark:1}. Since we are not assured that our ``witness'' $k(G)$ is a $p'$-number, there are more possibilities to consider in each case.

\begin{proposition} \label{prop:liecrossnowitness}
Let $G$ be a finite simple group isomorphic to one of $\psl{2}{q}$, $\psl{3}{q}$, $\psu{3}{q}$ or $\psu{4}{2}\cong \psp{4}{3}$, where $q$ is a power of the prime $r$. If $p$ is an odd prime dividing $|G|$ and $p \neq r$, then $d_{p'}(G) \le 1/(p-1)$. Furthermore, equality holds if and only if $p=7$ and $G = \psl{3}{2} \cong \psl{2}{7}$.
\end{proposition}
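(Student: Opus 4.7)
The plan is to handle each of the four listed families separately, exploiting the fact that in each case the conjugacy class structure is completely explicit. Since $k_{p'}(G)\le k(G)$ is in general too weak (as illustrated by the equality case), I would in each case produce a closed-form expression for $k_{p'}(G)$ and compare it directly with $|G|_{p'}/(p-1)$, rather than using the unrefined bound sketched in Remark \ref{remark:1}.

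First, dispose of $\psu{4}{2}\cong\psp{4}{3}$ by a direct GAP calculation: this is a single group of order $25920$, so the claim reduces to a handful of numeric checks, one for each odd prime $p\ne r$ dividing $|G|$. Next, for $G=\psl{2}{q}$ in cross characteristic, use the classical parametrization of conjugacy classes: besides the identity and the one or two unipotent classes (of order a power of $r$), the semisimple classes come from a split torus of order $(q-1)/d$ and a non-split torus of order $(q+1)/d$, where $d=\gcd(2,q-1)$, with the Weyl action identifying $x$ with $x^{-1}$. Since $p\ne r$, every unipotent class is $p$-regular, and the $p$-regular contribution from each cyclic torus $C_n$ is obtained by replacing $n$ with its $p'$-part $n/p^{v_p(n)}$ before counting Weyl orbits. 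This yields an exact formula for $k_{p'}(\psl{2}{q})$; comparing with $|G|_{p'}=q(q^2-1)/(d\,p^{v_p(q^2-1)})$ reduces the inequality $k_{p'}(G)(p-1)\le |G|_{p'}$ to an elementary polynomial estimate in $q$ and $p$, with only finitely many small-$q$ cases to check by hand.

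For $G=\psl{3}{q}$ and $\psu{3}{q}$, the same strategy applies using the explicit Simpson--Frame class list for $\psl{3}{q}$ (and its $\psu{3}{q}$ analogue). Every conjugacy class has order either a power of $r$ (unipotent and mixed classes, all $p$-regular), or a divisor of one of the cyclotomic-like factors $q-1$, $q+1$, $(q^2+q+1)/\gcd(3,q-1)$ (and correspondingly for $\psu{3}{q}$). Splitting into subcases according to which factor $p$ divides, substituting the $p'$-parts, and using the Weyl-group identifications produces a formula for $k_{p'}(G)$ that can be compared to $|G|_{p'}/(p-1)$ with an elementary polynomial inequality. The equality case appears here: for $G=\psl{3}{2}\cong\psl{2}{7}$ with $p=7$, one has $|G|_{7'}=24$ and the $7$-regular classes are precisely those of orders $1,2,3,4$, giving $k_{7'}(G)=4$ and $d_{7'}(G)=4/24=1/6=1/(p-1)$.

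The main obstacle I expect is the case analysis for $\psl{3}{q}$ and $\psu{3}{q}$: the precise count $k_{p'}(G)$ depends both on $\gcd(3,q\mp 1)$ and on the exact $p$-parts of the various cyclotomic factors, so ensuring a uniform treatment across all subcases, while correctly picking out the single equality instance, is the delicate point. Once that bookkeeping is in place, the inequalities are routine and the equality statement follows by inspecting exactly when the chain of estimates is tight.
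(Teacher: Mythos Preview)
Your proposal is sound and would work, but it takes a more laborious route than the paper. The key difference is that the paper does \emph{not} compute $k_{p'}(G)$ exactly except in one isolated subcase. Instead, for $\psl{3}{q}$, $\psu{3}{q}$, and $\psl{2}{q}$ with $q\ge 3$ (or $q\ge 7$), the paper uses only the crude bound $k_{p'}(G)<k(G)$, pulls the explicit class number $k(G)$ from Simpson--Frame, and then proves $p\cdot k(G)<|G|_{p'}$ by locating, case by case, a $p'$-divisor of $|G|_{p'}$ exceeding $p$. This is exactly the variant of Remark~\ref{remark:1} you dismissed as too weak: it is in fact strong enough everywhere except at the boundary instances $\psl{3}{2}$, $\psl{3}{4}$, $\psu{4}{2}$, and $\psl{2}{q}$ with $q+1=p$ prime, which are handled by direct computation (and only the last of these requires the exact value of $k_{p'}(G)$). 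Your approach---computing $k_{p'}(G)$ in closed form by tracking $p'$-parts of the torus orders through the Weyl identifications---trades the paper's short divisibility arguments for a heavier but more uniform bookkeeping, and it makes the equality case emerge from the formula rather than from a separate GAP check. Both are valid; the paper's is shorter, yours is more self-contained.
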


\begin{proof}
(1) If $G = \psu{4}{2}$, we may use the argument in Remark \ref{remark:2} and compute that $d_2(G) = 5/64 < 1/4 = 1/(\widetilde{p}-1)$, where $\widetilde{p}$ is the largest prime divisor of $|G|$.

\vspace{0.5em}

(2) Now we consider $G$ isomorphic to $\psl{3}{q}$ or $\psu{3}{q}$. If $G = \psl{3}{2} \cong \psl{2}{7}$, then $\pi(G)=\{2, 3, 7\}$. Calculating in GAP, we have $d_{3'}(G) = 5/56 < 1/(3-1)$ and $d_{7'}(G)=4/24 = 1/(7-1)$, as required. The group $\psu{3}{2}$ is solvable. Hence, we may assume in what follows that $q \ge 3$. Note that since $p \neq r$, the integer $q$ is a $p'$-number. The orders and class numbers of $\psl{3}{q}$ and $\psu{3}{q}$ are given in \cite{S73}.

(2.1) Suppose $G = \psl{3}{q}$ with $(3,q-1)=1$. We have 
\begin{linenomath}\begin{align*}
    |G| =q^3 (q-1)^2 (q+1) (q^2+q+1) \quad \text{and} \quad k_{p'}(G) < k(G) = q^2+q.
\end{align*}\end{linenomath}
So it suffices to show that $p(q^2+q) < |G|_{p'}$. First suppose that $p \mid (q+1)$. Then $p \nmid (q^2+q+1)$ since otherwise $p$ divides $q^2$. Furthermore, $p \le q+1$, so $p<q^2$. Then $p (q^2+q) < p(q^2+q+1) < q^2(q^2+q+1) < |G|_{p'}$, as desired. So we may assume that $p \nmid q+1$. This means $q(q+1)$ is a $p'$-number. We may also assume that $p$ divides the largest factor $q^2+q+1$ of $|G|$. If $p \nmid q-1$, then $q^2(q-1)^2$ is a $p'$-number greater than $q^2+q+1 \ge p$ when $q \ge 3$. So $pq(q+1) < q^2(q-1)^2q(q+1) \le |G|_{p'}$, as wanted. On the other hand, if $p \mid (q-1)$, then $p$ divides $(q^2+q+1)-(q-1)^2 = 3q$, so $p=3$. Since $q \ge 3$ and $p \nmid q$, we have $p < q$ and $pq(q+1) < q^2 (q+1) < |G|_{p'}$. The case of $G = \psl{3}{q}$ with $(3,q-1)=1$ is finished.

(2.2) Now suppose $G = \psl{3}{q}$ with $(3,q-1)=3$. If $G = \psl{3}{4}$, we may use the argument in Remark \ref{remark:2} and compute $d_2(G)=5/64<1/(7-1)=1/(\widetilde{p}-1)$. Thus, we may assume $q \ge 7$ since $(3,q-1)=3$. We have
\begin{linenomath}\begin{align*}
    |G| = \frac 13 q^3 (q-1)^2 (q+1) (q^2+q+1) \quad \text{and} \quad
    k_{p'}(G) < k(G) = \frac{1}{3}(q^2+q+10)<q^2.
\end{align*}\end{linenomath}
So it suffices to show that $p q^2 < |G|_{p'}$. We may assume that $p$ divides the largest factor $q^2+q+1$ of $|G|$. If $p \mid (q-1)$ as well, then $p$ divides $3q=(q^2+q+1)-(q-1)^2$, so $p=3$. Since $q \ge 7$, $q>p$ and $pq^2<q^3<|G|_{p'}$, as desired. On the other hand, if $p \nmid (q-1)$, then $\frac{1}{3}q(q-1)^2$ is a $p'$-number greater than $q^2+q+1\ge p$ since $q \ge 7$. Thus, $p q^2 < \frac{1}{3}q(q-1)^2 q^2 \le |G|_{p'}$ and this case is finished.

(2.3) Now suppose $G = \psu{3}{q}$ with $(3,q+1)=1$. We have
\begin{linenomath}\begin{align*}
    |G| =q^3 (q-1) (q+1)^2 (q^2-q+1) \quad \text{and} \quad
    k_{p'}(G) < k(G) =q^2+q+2 < (q+1)^2.    
\end{align*}\end{linenomath}
So it suffices to show that $p(q+1)^2 < |G|_{p'}$. Assume first that $p \mid (q+1)$. Then $p \nmid q^3(q-1)$ since $p\neq r$ is odd. If $p$ does not divide the largest factor $q^2-q+1$ of $|G|$, then $p(q+1)^2 < (q^2-q+1) q^3 \le |G|_{p'}$ and we're done. On the other hand, if $p \mid (q^2-q+1)$, then $p$ divides $3q=(q+1)^2-(q^2-q+1)$, so $p=3 \le q$. Thus, $p(q+1)^2 \le q(q+1)^2 < (q-1)q^3 \le |G|_{p'}$ for $q \ge 3$, as desired. Finally, assume that $p \nmid q+1$. We may assume that $p$ divides the largest factor $q^2-q+1$ of $|G|$. So we have $p (q+1)^2 \le (q^2-q+1)(q+1)^2 < q^3(q+1)^2 \le |G|_{p'}$ when $q \ge 3$.

(2.4) Finally, suppose $G = \psu{3}{q}$ with $(3,q+1)=3$. We have
\begin{linenomath}\begin{align*}
    |G| = \frac 13 q^3 (q-1) (q+1)^2 (q^2-q+1) \quad \text{and} \quad 
    k_{p'}(G) < k(G) = \frac 13 (q^2+q+12)<  q^2.
\end{align*}\end{linenomath}
So it suffices to show that $pq^2 < |G|_{p'}$. We may assume that $p$ divides the largest factor $q^2-q+1$ of $|G|$. If $p \mid (q+1)$, then $p$ divides $3q=(q+1)^2-(q^2-q+1)$, so $p=3$. Since $q \ge 3$, $pq^2 < q^3 \le |G|_{p'}$, as desired. On the other hand, if $p \nmid (q+1)$, then $\frac 13 q(q+1)^2$ is a $p'$-number greater than $q^2-q+1 \ge p$ when $q \ge 3$. 

We have proved the proposition in the case that $G$ is a simple group $\psl{3}{q}$ or $\psu{3}{q}$.

\vspace{0.5em}

(3) To finish the proof, we consider the simple groups $G = \psl{2}{q}$ with $p \nmid q$. Since $\psl{2}{2}$ and $\psl{2}{3}$ are solvable and $\psl{2}{4} \cong \psl{2}{5} \cong A_5$ was addressed in Proposition \ref{prop:alternating}, we may assume that $q \ge 7$.

(3.1) First assume that $q$ is odd. We have
\begin{linenomath}\begin{align*}
|G|=\frac 12 q(q-1)(q+1) \quad \text{and} \quad k_{p'}(G) < k(G)=(q+5)/2 < q
\end{align*}\end{linenomath}
since $q \ge 7$. So it suffices to show that $pq < |G|_{p'}$. Note that $q-1$ and $q+1$ are even. If $p$ does not divide the largest $p'$-factor $(q+1)/2$ of $|G|$, then $(q+1)/2$ is a $p'$-number greater than $p$ and $pq < \frac 12 (q+1)q \le |G|_{p'}$, as desired. On the other hand, if $p$ divides $(q+1)/2$, then $q-1$ is a $p'$-number greater than $(q+1)/2 \ge p$ since $q \ge 7$ and we have $pq < (q-1)q \le |G|_{p'}$. The case of $q$ odd is finished.

(3.2) Now assume $q$ is even, so we may assume $q \ge 8$. We have
\begin{linenomath}\begin{align*}
|G|=q(q-1)(q+1) \quad \text{and} \quad k_{p'}(G) < k(G) = q+1.
\end{align*}\end{linenomath}
So it suffices to show that $p(q+1) < |G|_{p'}$. If $p \nmid (q+1)$, then $p$ must divide $q-1<q$. Then $p(q+1) <q(q+1) \le |G|_{p'}$, as required. So we may assume $p \mid (q+1)$, and consequently $p \nmid (q-1)$. 

(3.2.i) First assume $q+1$ is not prime. Then $q+1=pm$ for some integer $m>1$. Since $q$ is even, $q+1$ is odd and we have $p = (q+1)/m \le (q+1)/3 < (q-1)/2$. Therefore,
\begin{linenomath}\begin{align*}
    p(q+1) < \frac 12 (q-1) (q+1) < \frac 12 (q-1) (2q) = q(q-1) \le |G|_{p'}. 
\end{align*}\end{linenomath}

(3.2.ii) Finally, assume $q+1=p$ is prime. Then $|G|_{p'}=q(q-1)$ and we compute $d_{p'}(G)$. It is known that $G=\psl{2}{q}$ has the following class structure when $q$ is even: a single class of elements of order 1, a single class of elements of order 2, $(q-2)/2$ classes of nonidentity elements of order dividing $q-1$, and $q/2$ classes of nonidentity elements of order dividing $q+1=p$. Therefore, $k_{p'}(G) = (q+2)/2$. We have
\begin{linenomath}\begin{align*}
    d_{p'}(G) = \frac{q+2}{2q(q-1)} < \frac{1}{q}=\frac{1}{p-1}
\end{align*}\end{linenomath}
since $q \ge 8$. This final case is finished and Proposition \ref{prop:liecrossnowitness} is proved.
\end{proof}

Proposition \ref{prop:psolvableodd_simplegroups} follows from Propositions \ref{prop:alternating}-\ref{prop:liecrossnowitness}. We can now prove the $p$-solvability criterion of Theorem \ref{thm1:odd}.


\begin{theorem}\label{thm:psolvableodd} 
Let $G$ be a finite group and let $p$ be an odd prime. If $d_{p'}(G)>1/(p-1)$, then $G$ is $p$-solvable.
\end{theorem}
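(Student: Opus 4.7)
The plan is a minimal counterexample argument built on top of the already-proved Proposition \ref{prop:psolvableodd_simplegroups}, which bounds $d_{p'}(S) \le 1/(p-1)$ for every nonabelian finite simple group $S$ with $p$ dividing $|S|$.

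I would suppose for contradiction that $G$ is a counterexample of smallest possible order: $d_{p'}(G) > 1/(p-1)$, yet $G$ is not $p$-solvable. Two applications of Lemma \ref{lemma:1}(1) give $d_{p'}(G/N) \ge d_{p'}(G)$ and $d_{p'}(N) \ge d_{p'}(G)$ for every nontrivial $N \trianglelefteq G$. Consequently, if there exists a proper nontrivial normal subgroup $N$, both $N$ and $G/N$ satisfy the hypothesis of the theorem and have order strictly less than $|G|$, so by the minimality of $|G|$ both are $p$-solvable. But then $G$ itself would be $p$-solvable, since an extension of a $p$-solvable group by a $p$-solvable group is $p$-solvable, and this contradicts the assumption that $G$ is a counterexample.

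Therefore $G$ has no proper nontrivial normal subgroup, that is, $G$ is simple. If $G$ is abelian simple, it is cyclic of prime order and in particular $p$-solvable, a contradiction; so $G$ is nonabelian simple. If $p \nmid |G|$, then $G$ is a $p'$-group and once again $p$-solvable. Hence $p \mid |G|$, and Proposition \ref{prop:psolvableodd_simplegroups} applies and yields $d_{p'}(G) \le 1/(p-1)$, which contradicts the hypothesis $d_{p'}(G) > 1/(p-1)$. This contradiction completes the minimal counterexample argument.

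The main obstacle has already been overcome and is not present in this proof itself: all the hard work has been absorbed into Proposition \ref{prop:psolvableodd_simplegroups}, whose case-by-case verification (Propositions \ref{prop:alternating}--\ref{prop:liecrossnowitness}) depends on the Classification of Finite Simple Groups. Once that simple-group bound is in hand, the passage to the general case is purely formal and requires no further use of the Classification, no torus arguments, and no examination of orders of simple groups.
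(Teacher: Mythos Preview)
Your proof is correct and follows essentially the same approach as the paper: both reduce to the simple-group bound of Proposition~\ref{prop:psolvableodd_simplegroups} via the monotonicity of $d_{p'}$ under subquotients from Lemma~\ref{lemma:1}(1). The paper simply applies Lemma~\ref{lemma:1}(1) directly along a composition series to reach a nonabelian simple factor $S$ with $p\mid|S|$, whereas you phrase the same reduction as a minimal-counterexample argument; the content is identical.
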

\begin{proof}
Assume that $d_{p'}(G)>1/(p-1)$. Suppose for contradiction that $G$ is not $p$-solvable. This means $G$ has a composition factor isomorphic to a nonabelian simple group $S$ with $p$ dividing $|S|$. By applying Lemma \ref{lemma:1}(1) repeatedly to a composition series for $G$, we have $1/(p-1) < d_{p'}(G) \le d_{p'}(S)$. But this contradicts Proposition \ref{prop:psolvableodd_simplegroups}.
\end{proof}

Next, we consider the solvability criterion of Theorem \ref{thm2:even}. Before we begin, recall that the proof of the $p$-solvability criterion in Theorem \ref{thm1:odd} was difficult when the prime $p$ under consideration was the largest prime dividing $|G|$. If $p=2$ and $G$ is a nonabelian finite simple group, this is never the case and there are more applicable results in the literature. Our proof of the solvability criterion in Theorem \ref{thm2:even} uses strong results contained in \cite{M23} for nonabelian finite simple groups, which we collect in the following proposition. We use $\mr{PSL}_n^\epsilon(q)$ to denote $\psl{n}{q}$ when $\epsilon=+$ and $\psu{n}{q}$ when $\epsilon=-$. We also use $\mr{E}^\epsilon_6(q)$ to denote $\mr{E}_6(q)$ when $\epsilon=+$ and ${}^2\mr{E}_6(q)$ when $\epsilon=-$. The statements of Proposition \ref{thm2} are all contained in \cite{M23}. Part (1) is Theorem 1.1, part (2) is Theorem 5.3, part (3) is Theorem 5.4, part (4) is Theorem 5.7, and part (5) is Lemma 5.9 of \cite{M23}.

\begin{proposition}\label{thm2} 
Let $G$ be a finite group and let $\pi$ be a set of primes. 
\begin{enumerate}[nolistsep,label=\textup{(\arabic*)}]
    \item Let $p$ be the smallest member of $\pi$. If $d_\pi(G) > 1/p$, then $G$ has a nilpotent Hall $\pi$-subgroup. Moreover, if $d_\pi(G) > (p^2+p-1)/p^3$, then $G$ has an abelian Hall $\pi$-subgroup.
    \item Let $S$ be a sporadic simple group (including the Tits group) and $\pi=\{p,q\}$ where $p<q$ are odd primes dividing $|S|$. If $(S,\pi) \neq (J_1,\{3,5\})$, then $d_\pi(S) \le 1/p$.
    \item Let $S$ be a finite simple group of Lie type in characteristic $p>2$ and $\pi=\{p,s\}$, where $s$ is an odd prime dividing $|S|$. Then $d_\pi(S) \le 1/s$.
    \item Let $S$ be a finite simple group of Lie type, let $\pi$ be a set of two odd primes not containing the defining characteristic of $S$, and let $p$ be the smaller prime in $\pi$. Assume that we are not in one of the following situations:
    \begin{enumerate}[nolistsep,label=(\roman*)]
        \item $S=\mr{E}^\epsilon_6(q)$ with $3 \in \pi$.
        \item $S=\mr{PSL}_n^\epsilon(q)$ with $n \ge 3$ and $\mr{gcd}(n,q-\epsilon)_\pi \neq 1$.
    \end{enumerate}    
    Then $d_\pi(S) \le 1/p$.
    \item Let $p$ be an odd prime and $S=\mr{PSL}_n^\epsilon(q)$. Assume that $p$ divides $\mr{gcd}(n,q-\epsilon)$ and Sylow $p$-subgroups of $S$ are abelian. Then $n=p=3$.
\end{enumerate}
\end{proposition}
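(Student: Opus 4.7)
Every part of this proposition is a verbatim restatement of a theorem or lemma from \cite{M23}, so in honesty the plan is to cite the appropriate results there. Let me nevertheless sketch how one would attempt to prove each piece from scratch and indicate where the difficulty lies.

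For part (1), the passage from $d_\pi(G) > 1/p$ to the existence of a nilpotent (respectively abelian) Hall $\pi$-subgroup is a deep structural statement. The natural strategy is induction on $|G|$, using Lemma \ref{lemma:1}(1) to reduce to groups with no proper normal subgroup of the same $\pi$-density, and then invoking CFSG to rule out nonabelian composition factors whose $\pi$-part would be too large to accommodate the hypothesis. The bound $1/p$ with $p$ the smallest prime in $\pi$ is sharp precisely because of Frobenius-type examples like the Mersenne constructions $C_\ell^n \rtimes C_p$ appearing earlier in this paper, and the proof must be tight enough to identify these as the only obstructions. The strengthening to $(p^2+p-1)/p^3$ for abelianness would in addition require a commuting-probability analysis inside the Hall subgroup.

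For parts (2), (3), and (4)—three families of bounds $d_\pi(S) \le 1/p$ (or $1/s$) for nonabelian finite simple groups $S$—I would follow the template already used in this paper for Propositions \ref{prop:sporadic}, \ref{prop:liedefining}, \ref{prop:liecrosswitness}, and \ref{prop:liecrossnowitness}: produce a suitable $\pi$-witness inside $S$ and then bound $k_\pi(S)$ by the order of this witness. For sporadic groups (part 2) one checks the finitely many cases by direct computation in GAP. For groups of Lie type in defining characteristic (part 3) one uses an $r$-subgroup witness, where $r$ is the defining characteristic, combined with the order formulas in Table \ref{table:tori}. For Lie type groups in cross characteristic (part 4), one bounds $k(S)$ by the number of semisimple classes and compares with the index of a maximal torus containing a Sylow $\pi$-subgroup; the excluded cases $\mr{E}^\epsilon_6(q)$ with $3 \in \pi$ and $\mr{PSL}_n^\epsilon(q)$ with $\gcd(n,q-\epsilon)_\pi \neq 1$ are precisely those where the Sylow structure is tangled with diagonal automorphisms and the center, so that a single torus witness is insufficient and a finer argument is required.

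Part (5) is the structural statement that if $p \mid \gcd(n,q-\epsilon)$ and Sylow $p$-subgroups of $\mr{PSL}_n^\epsilon(q)$ are abelian, then $n = p = 3$. The approach is to pass to $\mr{GL}_n^\epsilon(q)$ and analyze Sylow $p$-subgroups there: writing $n = ap + b$ with $0 \le b < p$ and decomposing along the corresponding direct factor structure, abelianness of the Sylow forces $a \le 1$, which reduces to a small case analysis modulo the center and the congruence condition $p \mid \gcd(n,q-\epsilon)$.

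The main obstacle is unambiguously part (1): the quantitative structural conclusion is genuinely new in \cite{M23} and requires substantial CFSG input, whereas parts (2)–(5) are technically involved but conceptually follow the strategies already visible in the present paper.
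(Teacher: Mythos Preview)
Your proposal is correct and matches the paper exactly: the paper does not prove Proposition~\ref{thm2} at all but simply cites the relevant results of \cite{M23}, stating immediately before the proposition that ``Part (1) is Theorem 1.1, part (2) is Theorem 5.3, part (3) is Theorem 5.4, part (4) is Theorem 5.7, and part (5) is Lemma 5.9 of \cite{M23}.'' Your opening sentence identifies this, and the rest of your sketch is extra context the paper does not supply.
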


Now we prove the solvability criterion of Theorem \ref{thm2:even}.

\begin{theorem}\label{thm:psolvableeven}
Let $G$ be a finite group. If $d_{2'}(G)>4/15$, then $G$ is solvable.
\end{theorem}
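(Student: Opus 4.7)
The strategy will mirror the proof of Theorem~\ref{thm:psolvableodd}. Suppose for contradiction that $d_{2'}(G) > 4/15$ while $G$ is nonsolvable. By the Feit--Thompson Odd Order Theorem, $G$ is not $2$-solvable, so some composition factor of $G$ is isomorphic to a nonabelian simple group $S$ (necessarily of even order). Applying Lemma~\ref{lemma:1}(1) repeatedly along a composition series for $G$ gives $d_{2'}(G) \le d_{2'}(S)$, so the whole theorem reduces to showing that $d_{2'}(S) \le 4/15$ for every nonabelian finite simple group $S$; combined with the hypothesis, this delivers a contradiction and establishes solvability.

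When $|\pi(S)| \le 3$, the group $S$ appears in Table~\ref{table:4primes}, and Lemma~\ref{lemma:2} already yields $d_{2'}(S) \le 4/15$ (with equality only at $S \cong A_5$). Assume henceforth that $|\pi(S)| \ge 4$. Since $2 \in \pi(S)$, the set $\pi(S)$ then contains at least three odd primes, and in particular at least two odd primes $\ge 5$; choose two such primes $p < q$ and set $\pi := \{p,q\}$. By Lemma~\ref{lemma:1}(2),
\[
d_{2'}(S) \;\le\; d_\pi(S),
\]
so it suffices to prove $d_\pi(S) \le 1/p \le 1/5 < 4/15$ by invoking Proposition~\ref{thm2}.

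The part of Proposition~\ref{thm2} to invoke depends on the isomorphism type of $S$. For sporadic groups and the Tits group I would use part~(2): the excluded pair $(J_1, \{3,5\})$ is automatically ruled out because $3 \notin \pi$, and we obtain $d_\pi(S) \le 1/p$. For a group of Lie type in defining characteristic $r$, either $r \in \pi$, in which case part~(3) gives $d_\pi(S) \le 1/s$ where $s \ge 5$ is the other prime in $\pi$; or $r \notin \pi$, in which case part~(4) gives $d_\pi(S) \le 1/p$, noting that the excluded case (i) is avoided since $3 \notin \pi$. For alternating groups $A_n$ with $n \ge 7$, which fall outside the scope of parts~(2)--(4), I would apply part~(1) with $\pi = \{5,7\} \subseteq \pi(A_n)$: by its contrapositive, it suffices to verify that $A_n$ has no nilpotent Hall $\{5,7\}$-subgroup. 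For $7 \le n \le 11$ no element of order $35$ even exists, so no such Hall subgroup exists at all; for $n \ge 12$ the Sylow $5$- and Sylow $7$-subgroups cannot centralize one another in $A_n$, because the required supports cannot be simultaneously made to commute with the action of a $7$-cycle.

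The principal obstacle I anticipate is the excluded case~(ii) of Proposition~\ref{thm2}(4): $S \cong \mr{PSL}_n^\epsilon(q_0)$ with $n \ge 3$ and some $p_0 \in \pi$ dividing $\gcd(n, q_0 - \epsilon)$. Since $p_0 \ge 5$ is forced by our choice of $\pi$, this implies $n \ge 5$, so $|S|$ has enough distinct prime divisors that one can typically replace $\pi$ by another pair $\pi' \subseteq \pi(S)$ of odd primes $\ge 5$ for which the $\gcd$-obstruction disappears, and then re-apply part~(4). Verifying cleanly that such a $\pi'$ always exists---or else handling the finitely many residual cases by combining Proposition~\ref{thm2}(5) with a direct estimate on $k_{p_0}(S)$---is the main remaining technicality in carrying out the proof.
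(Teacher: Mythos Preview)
Your overall architecture matches the paper's: reduce to simple $S$, dispose of $|\pi(S)|\le 3$ via Lemma~\ref{lemma:2}, then for $|\pi(S)|\ge 4$ pick a two-element set $\pi$ of odd primes $\ge 5$ and push $d_{2'}(S)\le d_\pi(S)\le 1/5$ through Proposition~\ref{thm2}. The sporadic and Lie-type cases you handle are essentially the paper's arguments.

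There are two places where your proposal diverges and one of them is a genuine gap. For the alternating groups, your ad hoc support-counting argument (``the required supports cannot be simultaneously made to commute with the action of a $7$-cycle'') is not a proof as stated; making it rigorous for all $n\ge 12$ is unpleasant because Sylow $5$- and $7$-subgroups are no longer cyclic. The paper instead takes $\pi=\pi(A_n)\setminus\{2,3\}$ and invokes Hall's classification \cite{H56} of solvable Hall subgroups of $S_n$ for $n\ge 7$: the only ones are Sylow subgroups and the Hall $\{2,3\}$-subgroups of $S_7,S_8$, so no nilpotent Hall $\pi$-subgroup exists and Proposition~\ref{thm2}(1) finishes. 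Your $\pi=\{5,7\}$ would work just as well once you cite Hall's theorem rather than argue by hand.

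The substantive gap is exactly where you flag it: the exception (ii) of Proposition~\ref{thm2}(4), $S=\mr{PSL}_n^\epsilon(q)$ with $\gcd(n,q-\epsilon)_\pi\neq 1$. Your proposed fixes (swap $\pi$ for a better $\pi'$, or estimate $k_{p_0}(S)$ directly) are not how the paper proceeds and you do not carry them out. The paper's resolution is short and uses ingredients you already listed: assume for contradiction $d_{2'}(S)\ge 4/15$; since $p\ge 5$ one has $(p^2+p-1)/p^3<4/15\le d_{2'}(S)\le d_\pi(S)$, so by the \emph{second} clause of Proposition~\ref{thm2}(1) the group $S$ has an \emph{abelian} Hall $\pi$-subgroup, hence abelian Sylow $p$- and $s$-subgroups. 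But now Proposition~\ref{thm2}(5) forces the prime dividing $\gcd(n,q-\epsilon)$ to equal $3$, contradicting $p,s\ge 5$. That is the missing step.
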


\begin{proof}
The hypothesis is inherited by every composition factor of $G$ by Lemma \ref{lemma:1}(1). So to prove the theorem, it is enough to assume that $G$ is nonabelian simple and prove that $d_{2'}(G) \le 4/15$. Moreover, the proof will show that if $G$ is nonabelian simple, then $d_{2'}(G) = 4/15$ if and only if $G \cong A_5$. By Lemma \ref{lemma:2}, we may assume that $|G|$ is divisible by at least four distinct primes.

(1) Suppose $G$ is a sporadic simple group or the Tits group. Since $|G|$ is divisible by at least four distinct primes, there exist odd primes $p,q$ dividing $|G|$ with $5 \le p < q$. Letting $\pi=\{p,q\} \subseteq 2' \cap \pi(G)$ and using Lemma \ref{lemma:1}(2) and Proposition \ref{thm2}(2), we have $d_{2'}(G) \le d_\pi(G) \le 1/p \le 1/5 < 4/15$. Note that we are not in the exceptional case $(J_1,\{3,5\})$ of Proposition \ref{thm2}(2) since both $p$ and $q$ are greater than or equal to $5$.

(2) Suppose $G$ is a finite simple group of Lie type in characteristic $p>2$. Since $|G|$ is divisible by at least four distinct primes, we can choose a prime $s \neq p$ such that $s \ge 5$. Setting $\pi=\{p,s\}$, Proposition \ref{thm2}(3) says that $d_{2'}(G) \le d_\pi(G) \le 1/s \le 1/5 < 4/15$.

(3) Suppose $G$ is a simple alternating group. Since we addressed $A_5$ and $A_6$ in Lemma \ref{lemma:2}, we may assume that $G = A_n$ with $n \ge 7$. Assume for contradiction that $d_{2'}(G) \ge 4/15$. Let $\pi = \pi(G) - \{2,3\} \subseteq 2'$, so that the smallest prime in $\pi$ is $p=5$. Note that since $n \ge 7$, we have $|\pi|\ge 2$. Now using Lemma \ref{lemma:1}(2), we have $1/p = 1/5 < 4/15 \le d_{2'}(G) \le d_\pi(G)$. Consequently, $G$ has a nilpotent Hall $\pi$-subgroup $H$ by Theorem \ref{thm2}(1). Since $\pi$ consists of odd primes, $H$ is also a nilpotent Hall $\pi$-subgroup of $S_n$. By \cite[Theorem A4]{H56}, the only solvable Hall subgroups of $S_n$ for $n \ge 7$ are the Sylow subgroups and the Hall $\{2,3\}$-subgroups of $S_7$ and $S_8$. Therefore, $G$ has no Hall $\pi$-subgroups since $\pi$ is a collection of at least two odd primes. We have obtained a contradiction, which proves $d_{2'}(A_n) < 4/15$ for $n \ge 7$. Furthermore, Table \ref{table:4primes} shows that $d_{2'}(A_6)<4/15$ and $d_{2'}(A_5)=4/15$.

(4) Suppose $G$ is a finite simple group of Lie type in even characteristic. Since $|G|$ is divisible by at least four distinct primes, there exist two odd primes $5\le p < s$ dividing $|G|$. Let $\pi=\{p,s\} \subseteq 2' \cap \pi(G)$. Note that we are not in case (i) of Proposition \ref{thm2}(4) since $3 \not\in \pi$. Assume for the moment that we are not in case (ii), either. Then by Proposition \ref{thm2}(4), $d_{2'}(G) \le d_\pi(G) \le 1/p \le 1/5 < 4/15$.

To finish the proof, it remains to consider case (ii) of Proposition \ref{thm2}(4). That is, $G = \mathrm{PSL}^\epsilon_n(q)$ with $n \ge 3$, $q=2^a$ and $\mr{gcd}(n,q-\epsilon)_\pi \neq 1$, where $\pi=\{p,s\}$ as before. Suppose for contradiction that $d_{2'}(G) \ge 4/15$. Then since $p \ge 5$ we have
$(p^2+p-1)/p^3 < 4/15 \le d_{2'}(G) \le d_\pi(G)$. Therefore, $G$ has an abelian Hall $\pi$-subgroup by Theorem \ref{thm2}(1). In particular, the Sylow $p$-subgroups and the Sylow $s$-subgroups of $G$ are abelian. Since $p \mid (n,q-\epsilon)$ or $s \mid (n,q-\epsilon)$ by assumption, either $p=3$ or $s=3$ by Theorem \ref{thm2}(5). In either case, this contradicts $5 \le p < s$. Therefore, $d_{2'}(G)<4/15$ in this case as well. 
\end{proof}

\section{A normal Sylow \texorpdfstring{$p$}{p}-subgroup criterion and its consequences}\label{sec:normalp}
In this section, we first prove the normal Sylow $p$-subgroup criterion of Theorem \ref{thm:normalp}, which we reproduce here.
\vspace{0.5em}

\begin{theorem}
Let $G$ be a finite group and let $p$ be a prime. If $d_{p'}(G)>2/(p+1)$, then $G$ has a normal Sylow $p$-subgroup.
\end{theorem}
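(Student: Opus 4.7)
The plan is to first reduce to the $p$-solvable case, then set up a minimal counterexample and analyze its structure by induction. For odd $p$, the arithmetic inequality $2/(p+1) \ge 1/(p-1)$ (with equality only at $p = 3$) shows that the hypothesis $d_{p'}(G) > 2/(p+1)$ implies $d_{p'}(G) > 1/(p-1)$, so $G$ is $p$-solvable by Theorem \ref{thm1:odd}(1). For $p = 2$, the inequality $2/3 > 4/15$ gives that $G$ is solvable via Theorem \ref{thm2:even}(1).

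I would then induct on $|G|$ and take a minimal counterexample. By Lemma \ref{lemma:1}(1), every proper quotient $G/N$ satisfies $d_{p'}(G/N) \ge d_{p'}(G) > 2/(p+1)$, so by minimality $G/N$ has a normal Sylow $p$-subgroup for every $1 \ne N \nrm G$. This forces $\mbf{O}_p(G) = 1$, since otherwise the preimage of the normal Sylow $p$-subgroup of $G/\mbf{O}_p(G)$ would be a normal Sylow $p$-subgroup of $G$. As $G$ is $p$-solvable with $\mbf{O}_p(G) = 1$, every minimal normal subgroup of $G$ is a $p'$-group. Picking a minimal normal $p'$-subgroup $N$ and applying induction, $G/N$ has a normal Sylow $p$-subgroup, so $M = NP \nrm G$ for some Sylow $p$-subgroup $P$ of $G$. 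Frattini's argument then yields $G = N \cdot \Norm{G}{P}$, and a short coprime-action check (if $n \in N$ normalizes $P$ then $[n, P] \subseteq N \cap P = 1$) shows $N \cap \Norm{G}{P} = \Cen{N}{P}$.

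The contradiction must come from showing $P$ centralizes $N$: once $[P, N] = 1$, we have $M = N \times P$, so $P$ is characteristic in $M$ and therefore normal in $G$. So I need to show that if $[P, N] \ne 1$ then $d_{p'}(G) \le 2/(p+1)$. I expect this to be the main obstacle. The approach I would try is to combine Lemma \ref{lemma:1}(1) in the form $d_{p'}(G) \le d_{p'}(G/N) \cdot k(N)/|N|$ with tight bounds split by the structure of $N$. If $N$ is a product of nonabelian simple $p'$-groups, then $k(N)/|N| \le 1/12$ is small and the $p'$-witnesses from \cite{M21} should bound $d_{p'}(G/N)$. If $N$ is elementary abelian, the extremal case is modeled exactly by the Frobenius group $C_2^n \rtimes C_p$ with $p = 2^n - 1$: there $P$ acts irreducibly on $N$ with $\Cen{N}{P} = 1$, giving exactly two $G$-orbits on $N$, every element outside $N$ is a $p$-element, and $d_{p'}(G) = 2/(p+1)$. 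Replicating this bound in general will require a Clifford-style analysis of $N$ as a $G/N$-module together with a count of the $p$-regular classes of $G$ lying outside $N$, which can be located in conjugates of a Hall $p'$-subgroup of $G$.
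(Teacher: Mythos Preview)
Your setup through the minimal counterexample, $p$-solvability, $\mbf{O}_p(G)=1$, and the normality of $NP$ matches the paper. The gap is in the endgame: you never complete the argument, and the speculative route you sketch (Clifford theory, splitting into abelian versus nonabelian $N$, counting $p$-regular classes outside $N$) is both harder than necessary and, for small $p$, unlikely to close. For instance, the bound $d_{p'}(G)\le d_{p'}(G/N)\cdot d(N)\le 1/12$ when $N$ is a nonabelian simple $p'$-group only yields $d_{p'}(G)\le 2/(p+1)$ for $p\ge 23$.

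You are missing two short steps that finish the proof cleanly. First, apply minimality once more to the normal subgroup $NP$ itself: since $NP\nrm G$, Lemma~\ref{lemma:1}(1) gives $d_{p'}(NP)>2/(p+1)$, so if $NP<G$ then $P\nrm NP$, whence $P$ is characteristic in $NP\nrm G$ and $P\nrm G$, a contradiction. Thus $G=NP$. Second, once $G=NP$ with $N$ a normal $p'$-subgroup, every $p'$-element of $G$ lies in $N$ (its image in the $p$-group $G/N$ has $p'$-order, hence is trivial), so $k_{p'}(G)$ is exactly the number of $G$-orbits on $N$, which is at most the number of $P$-orbits on $N$. Now your own identification $\Norm{G}{P}\cap N=\Cen{N}{P}$, combined with $G=N\Norm{G}{P}$, gives $[N:\Cen{N}{P}]=[G:\Norm{G}{P}]\ge p+1$ by Sylow. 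Counting $P$-orbits on $N$ (fixed points plus orbits of size $\ge p$) yields
\[
k_{p'}(G)\le |\Cen{N}{P}|+\frac{|N|-|\Cen{N}{P}|}{p},
\]
and dividing by $|N|=|G|_{p'}$ gives $d_{p'}(G)\le \tfrac{1}{p}+\tfrac{p-1}{p}\cdot\tfrac{1}{p+1}=\tfrac{2}{p+1}$, the desired contradiction. No case analysis on the structure of $N$ is needed.
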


\begin{proof}
Suppose $G$ is a minimal counterexample. If $p >2$, then $d_{p'}(G) >2/(p+1) \ge 1/(p-1)$ and $G$ is $p$-solvable by Theorem \ref{thm:psolvableodd}. If $p=2$, then $d_{2'}(G) > 2/3 > 4/15$ and $G$ is solvable by Theorem \ref{thm:psolvableeven}. 

We first claim that $\mbf{O}_p(G)=1$. If not, then $|G/\mbf{O}_p(G)|<|G|$ and $d_{p'}(G/\mbf{O}_p(G)) > 2/(p+1)$ by Lemma \ref{lemma:1}(1). Consequently, $G/\mbf{O}_p(G)$ has a normal Sylow $p$-subgroup by the minimality of $|G|$, and then $G$ does as well. This contradicts our assumption that $G$ is a counterexample. Now let $N= \mbf{O}_{p'}(G)$. Since $G$ is $p$-solvable and $\mbf{O}_p(G)=1$, we have $N>1$.

We next claim that $G=NP$, where $P \in \mr{Syl}_p(G)$. Note that $d_{p'}(G/N) > 2/(p+1)$ by Lemma \ref{lemma:1}(1), so $NP/N \nrm G/N$ by the minimality of $|G|$. In particular, $NP \nrm G$ and so $d_{p'}(NP) > 2/(p+1)$ by Lemma \ref{lemma:1}(1) again. If $NP < G$, then $P \nrm NP$ by the minimality of $|G|$. But then $P$ is characteristic in $NP \nrm G$ and so $P \nrm G$, a contradiction. So $G=NP$ and we have $\Norm{G}{P} = \Norm{G}{P} \cap NP = \Cen{N}{P} P$ by Dedekind's modular law, which implies $[N:\Cen{N}{P}]=[G:\Norm{G}{P}] \ge p+1$ by Sylow's theorem.

We reach our final contradiction by bounding $k_{p'}(G)$ from above. The $p'$-conjugacy classes of $G=NP$ are the orbits of $G$ acting on $N$ by conjugation. We obtain an upper bound for $k_{p'}(G)$ if we count just the $P$-orbits. The number of trivial $P$-orbits is $|\Cen{N}{P}|$, and each nontrivial $P$-orbit has size at least $p$. Therefore,
\begin{linenomath}\begin{align*}
    k_{p'}(G) &\le \frac{|N|-|\Cen{N}{P}|}{p} + |\Cen{N}{P}| 
    = \frac{1}{p}|N| + \frac{p-1}{p} |\Cen{N}{P}|,
\end{align*}\end{linenomath}
and then
\begin{linenomath}\begin{align*}
    \frac{2}{p+1} &< d_{p'}(G) \le \frac 1p + \frac{p-1}{p} \frac{1}{[N:\Cen{N}{P}]} \le \frac 1p + \frac{p-1}{p(p+1)} = \frac{2}{p+1},
\end{align*}\end{linenomath}
a contradiction.
\end{proof}

Next, we use the normal Sylow $p$-subgroup criterion of Theorem \ref{thm:normalp} to refine our knowledge of the groups occurring in Theorems \ref{thm1:odd} and \ref{thm2:even}. We begin by bounding the $p$-length of the $p$-solvable groups.

\begin{proposition}\label{prop:6.1}
Let $G$ be a finite group and let $p$ be an odd prime. If $d_{p'}(G)>1/(p-1)$, then $G$ is $p$-solvable and has $p$-length at most 2.
\end{proposition}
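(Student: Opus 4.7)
The $p$-solvability of $G$ is immediate from Theorem~\ref{thm:psolvableodd}; the real content is the $p$-length bound. My plan is a minimal-counterexample argument that leverages Theorem~\ref{thm:normalp} through an identity letting the hypothesis pass to $G/\mbf{O}_p(G)$.

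First I would set up the reduction. Suppose $G$ is a $p$-solvable counterexample of minimal order with $l_p(G)\ge 3$. Lemma~\ref{lemma:1}(1) guarantees that the hypothesis descends to $G/\mbf{O}_{p'}(G)$, which has the same $p$-length, so by minimality $\mbf{O}_{p'}(G)=1$ and therefore $K := \mbf{O}_p(G) > 1$. The crucial identity $d_{p'}(G)=d_{p'}(G/K)$ then follows from $k_{p'}(G)=k_{p'}(G/K)$ (every simple $\mr{GF}(p)[G]$-module has $\mbf{O}_p(G)$ in its kernel) together with $|G|_{p'}=|G/K|_{p'}$. Thus $G/K$ is $p$-solvable with $\mbf{O}_p(G/K)=1$, a nontrivial $\mbf{O}_{p'}(G/K)$ (by $p$-solvability), $l_p(G/K)=l_p(G)-1\ge 2$, and still $d_{p'}(G/K)>1/(p-1)$.

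Because $l_p(G/K)\ge 2$ rules out a normal Sylow $p$-subgroup of $G/K$ (a normal Sylow would give $l_p=1$), Theorem~\ref{thm:normalp} applied to $G/K$ yields $d_{p'}(G/K)\le 2/(p+1)$. Combined with $d_{p'}(G/K)>1/(p-1)$, this forces $1/(p-1) < 2/(p+1)$, i.e., $p>3$. So the case $p=3$ is settled at once (both bounds collapse to $1/2$).

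For $p\ge 5$ this first squeeze is not enough, so I would iterate. Let $L/K := \mbf{O}_{p'}(G/K)$, which is nontrivial, and apply Theorem~\ref{thm:normalp} to $G/L$ (whose $p$-length is still at least $2$ because quotienting by $\mbf{O}_{p'}$ preserves $l_p$) to obtain $d_{p'}(G/L)\le 2/(p+1)$. Since $L$ has $K$ as a normal Sylow $p$-subgroup, $d_{p'}(L)=d(L/K)$, and Lemma~\ref{lemma:1}(1) on $L\nrm G$ gives
\begin{align*}
\tfrac{1}{p-1} < d_{p'}(G) \le d_{p'}(G/L)\,d_{p'}(L) \le \tfrac{2}{p+1}\,d(L/K),
\end{align*}
so $d(L/K) > (p+1)/(2(p-1))$ must hold. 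The main obstacle is to rule this out. The leverage I would use is that $G/K$ has $\mbf{O}_p(G/K)=1$, so the Fitting-subgroup self-centralizing property in $p$-solvable groups forces $\Cen{G/K}{L/K}\le L/K$; consequently the nontrivial $p$-group $\mbf{O}_p(G/L)$ embeds faithfully as a group of outer automorphisms of $L/K$, providing a coprime faithful $p$-action on $L/K$. A Sylow-orbit counting argument on $L/K$ in the spirit of the proof of Theorem~\ref{thm:normalp}, exploiting this faithful coprime action to force $[L/K:\Cen{L/K}{\mbf{O}_p(G/L)}]\ge p+1$, should then yield the sharper upper bound on $d(L/K)$ needed to close the contradiction.
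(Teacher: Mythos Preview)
Your setup through the first application of Theorem~\ref{thm:normalp} is fine, but the endgame has a genuine gap: you cannot hope to bound $d(L/K)$ by the proposed orbit-counting. The quantity $d(L/K)=k(L/K)/|L/K|$ is an \emph{intrinsic} invariant of the $p'$-group $L/K$ and is completely insensitive to any external action; nothing prevents $L/K$ from being abelian, in which case $d(L/K)=1$ and your inequality $d(L/K)>(p+1)/(2(p-1))$ carries no information. The faithful coprime action of $\mbf{O}_p(G/L)$ on $L/K$ can constrain the number of \emph{orbits} of that action (this is what the proof of Theorem~\ref{thm:normalp} does), not the number of conjugacy classes of $L/K$ itself. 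So splitting $d_{p'}(G)\le d_{p'}(G/L)\,d_{p'}(L)$ discards exactly the leverage you need.

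The fix is to split along a larger normal subgroup. Let $M/L=\mbf{O}_p(G/L)$, so $M/K=\mbf{O}_{p',p}(G/K)$. Then $\mbf{O}_p(M/K)\le\mbf{O}_p(G/K)=1$ (since $M/K$ is characteristic in $G/K$), while $M/L>1$ gives $M/K$ a nontrivial, hence nonnormal, Sylow $p$-subgroup; Theorem~\ref{thm:normalp} therefore yields $d_{p'}(M/K)\le 2/(p+1)$. Combining this with your bound $d_{p'}(G/L)\le 2/(p+1)$ and the identity $d_{p'}(G/L)=d_{p'}(G/M)$ (as $M/L=\mbf{O}_p(G/L)$), Lemma~\ref{lemma:1}(1) gives
\[
\frac{1}{p-1}<d_{p'}(G)=d_{p'}(G/K)\le d_{p'}(M/K)\,d_{p'}(G/M)\le\Big(\frac{2}{p+1}\Big)^2,
\]
which is a contradiction for every odd $p$ since $(p-1)\cdot 4\le (p+1)^2$; no separate treatment of $p=3$ is needed. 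This is exactly the paper's argument, phrased via the upper $p',p$-series $M_{i+1}/M_i=\mbf{O}_{p',p}(G/M_i)$: one applies Theorem~\ref{thm:normalp} to two consecutive $(p',p)$-sections, each contributing a factor $2/(p+1)$, rather than trying to extract anything from a bare $p'$-section.
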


\begin{proof}
Let $G$ be a finite group, $p$ an odd prime, and assume $d_{p'}(G)>1/(p-1)$. By Theorem \ref{thm:psolvableodd}, $G$ is $p$-solvable. Let $M_0=1$ and define the characteristic subgroup $M_{i+1}$ of $G$ recursively by $M_{i+1}/M_i=\mbf{O}_{p',p}(G/M_i)$ for $i \ge 0$. Recall that $\mbf{O}_{p'}(G)$ is the unique maximal normal $p'$-subgroup of $G$, and $\mbf{O}_{p',p}(G) \nrm G$ is defined by $\mbf{O}_{p',p}(G)/\mbf{O}_{p'}(G) = \mbf{O}_p(G/\mbf{O}_{p'}(G))$. Let $k$ be the smallest integer such that $M_k=G$, which exists since $G$ is $p$-solvable. The $p$-length of $G$ is $k$ or $k-1$, depending on whether $M_k/M_{k-1}$ contains a nontrivial $p$-group. Although it is possible that $\mbf{O}_p(G)>1$, it is an important observation for our proof that $\mbf{O}_p(G/M_i)=1$ for all $i \ge 1$ by construction of $M_i$. Since $\mbf{O}_p(M_{i+1}/M_i)$ is characteristic in $M_{i+1}/M_i$, which is itself characteristic in $G/M_i$, we have $\mbf{O}_p(M_{i+1}/M_i) \le \mbf{O}_p(G/M_i)=1$ for $i \ge 1$. In particular, if $M_{i+1}/M_i$ has a nontrivial Sylow $p$-subgroup for $i \ge 1$, then it is not normal.

Suppose for contradiction that $G$ has $p$-length greater than 2. Then $M_{i+1}/M_i$ has a nontrivial Sylow $p$-subgroup for $0 \le i \le 2$ and it is not normal for $1 \le i \le 2$. Therefore, $d_{p'}(M_{i+1}/M_i) \le 2/(p+1)$ for $1 \le i \le 2$ by Theorem \ref{thm:normalp}. By applying Lemma \ref{lemma:1}(1) repeatedly to the normal series $M_0 \nrm M_1 \nrm \dots \nrm M_k=G$, we have
\begin{linenomath}\begin{align*}
    \frac{1}{p-1} < d_{p'}(G) \le \prod_{i=0}^{k-1} d_{p'}(M_{i+1}/M_i) \le \prod_{i=1}^{2} d_{p'}(M_{i+1}/M_i) \le \bigg( \frac{2}{p+1} \bigg)^2.
\end{align*}\end{linenomath}
But this is a contradiction.
\end{proof}

\begin{proposition}\label{prop:6.2}
Let $G$ be a finite group. If $d_{2'}(G)>4/15$, then $G$ is solvable and has $2$-length at most 4.
\end{proposition}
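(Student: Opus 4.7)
The plan is to imitate the argument of Proposition \ref{prop:6.1} verbatim, now with $p = 2$ and exploiting the gap between the threshold $4/15$ and $(2/3)^4$. First I would invoke Theorem \ref{thm:psolvableeven} to obtain solvability of $G$, which in particular gives $2$-solvability. Next I would introduce the characteristic chain $M_0 = 1$ and $M_{i+1}/M_i = \mbf{O}_{2',2}(G/M_i)$, letting $k$ be the smallest index with $M_k = G$. Exactly as in the proof of Proposition \ref{prop:6.1}, the construction forces $\mbf{O}_2(G/M_i) = 1$ for every $i \ge 1$, and therefore $\mbf{O}_2(M_{i+1}/M_i) = 1$ for $i \ge 1$; consequently, whenever $M_{i+1}/M_i$ with $i \ge 1$ possesses a nontrivial Sylow $2$-subgroup, that Sylow is not normal.

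Now I would assume for contradiction that the $2$-length of $G$ exceeds $4$. Just as in the odd case, this forces the factors $M_{i+1}/M_i$ for $i = 0, 1, 2, 3, 4$ to all carry a nontrivial Sylow $2$-subgroup, and by the preceding observation this Sylow is non-normal for $i = 1, 2, 3, 4$. Applying Theorem \ref{thm:normalp} at $p = 2$ then gives $d_{2'}(M_{i+1}/M_i) \le 2/3$ for each $i \in \{1,2,3,4\}$. Iterating Lemma \ref{lemma:1}(1) along the normal series $M_0 \nrm M_1 \nrm \cdots \nrm M_k = G$ and dropping every factor outside the range $1 \le i \le 4$ yields
\begin{linenomath}\begin{align*}
\frac{4}{15} < d_{2'}(G) \le \prod_{i=0}^{k-1} d_{2'}(M_{i+1}/M_i) \le \prod_{i=1}^{4} d_{2'}(M_{i+1}/M_i) \le \left( \frac{2}{3} \right)^{4} = \frac{16}{81},
\end{align*}\end{linenomath}
which is a contradiction since $4 \cdot 81 = 324 > 240 = 16 \cdot 15$, so that $4/15 > 16/81$.

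I do not anticipate any real obstacle: the entire argument is a mechanical translation of the proof of Proposition \ref{prop:6.1}, and the only substantive point is the numerical inequality $(2/3)^4 < 4/15$. It is worth noting that this approach cannot be sharpened to a bound of $3$, since $(2/3)^3 = 8/27 > 4/15$, so one fewer factor in the telescoping product would fail to produce a contradiction. This is consistent with the remark following Theorem \ref{thm:normalp} that no example with $d_{2'}(G) > 4/15$ and $2$-length greater than $2$ is currently known, suggesting that the bound of $4$ recorded here is likely not best possible, but any improvement will require a genuinely different argument rather than a tightening of the present one.
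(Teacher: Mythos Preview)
Your proposal is correct and follows essentially the same argument as the paper's own proof: the same characteristic series $M_{i+1}/M_i=\mbf{O}_{2',2}(G/M_i)$, the same appeal to Theorem~\ref{thm:normalp} for the four factors with $1\le i\le 4$, and the same numerical contradiction $4/15>(2/3)^4$. Your added remarks on why the method cannot yield a bound of $3$ are accurate and consistent with the paper's commentary.
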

\begin{proof}
Let $G$ be a finite group and assume $d_{2'}(G)>4/15$. By Theorem \ref{thm:psolvableeven}, $G$ is solvable. Assume for contradiction that $G$ has $2$-length greater than 4. Again, let $M_0=1$ and construct the characteristic subgroup $M_{i+1}$ recursively by $M_{i+1}/M_i=\mbf{O}_{2',2}(G/M_i)$ for $i \ge 0$. Then $M_{i+1}/M_i$ has a nontrivial, nonnormal Sylow $2$-subgroup for $1 \le i \le 4$. Therefore, $d_{p'}(M_{i+1}/M_i) \le 2/3$ for $1 \le i \le 4$ by Theorem \ref{thm:normalp}. In the same way as before, we have
\begin{linenomath}\begin{align*}
    \frac{4}{15} < d_{2'}(G) \le \prod_{i=1}^4 d_{p'}(M_{i+1}/M_i) \le \bigg( \frac{2}{3} \bigg)^4.
\end{align*}\end{linenomath}
This is again a contradiction.
\end{proof}

A similar proof strategy allows us to prove Theorem \ref{thm4:plength}.

\begin{proposition}
Let $G$ be a finite group and let $p$ be a prime. If $G$ is $p$-solvable of $p$-length $k$, then $d_{p'}(G) \le \big( 2/(p+1) \big)^{k-1}$.
\end{proposition}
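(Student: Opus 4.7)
The plan is to mimic the arguments of Propositions~\ref{prop:6.1} and \ref{prop:6.2}, iterating the normal Sylow $p$-subgroup criterion of Theorem~\ref{thm:normalp} along the upper $p$-series of $G$. For $k \le 1$ the bound $(2/(p+1))^{k-1}$ is at least $1$, so the conclusion is trivial and I may assume $k \ge 2$.

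First I would reconstruct the chain from Proposition~\ref{prop:6.1}: set $M_0 = 1$ and recursively define the characteristic subgroups $M_{i+1}/M_i = \mbf{O}_{p',p}(G/M_i)$, terminating at some $M_\ell = G$ (which exists since $G$ is $p$-solvable). Exactly as in Proposition~\ref{prop:6.1}, the construction forces $\mbf{O}_p(G/M_i) = 1$ for every $i \ge 1$, and hence $\mbf{O}_p(M_{i+1}/M_i) = 1$ for each such $i$ as well. Moreover, since $G$ has $p$-length $k$, exactly $k$ of the quotients $M_{i+1}/M_i$ carry a nontrivial Sylow $p$-subgroup; these are the factors for $i = 0, 1, \dots, k-1$ (the topmost factor $M_\ell/M_{\ell-1}$, in the case $\ell = k+1$, is a pure $p'$-group).

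Combining these two observations, for every $i$ with $1 \le i \le k-1$ the quotient $M_{i+1}/M_i$ has a nontrivial Sylow $p$-subgroup that is not normal, so the contrapositive of Theorem~\ref{thm:normalp} yields $d_{p'}(M_{i+1}/M_i) \le 2/(p+1)$. Applying Lemma~\ref{lemma:1}(1) iteratively along the chain $M_0 \nrm M_1 \nrm \dots \nrm M_\ell = G$, and bounding the two factors outside this interior range (namely $M_1/M_0$ and possibly $M_\ell/M_{\ell-1}$) by $1$, I would obtain
\begin{linenomath}\begin{align*}
d_{p'}(G) \le \prod_{i=0}^{\ell-1} d_{p'}(M_{i+1}/M_i) \le \prod_{i=1}^{k-1} d_{p'}(M_{i+1}/M_i) \le \bigg(\frac{2}{p+1}\bigg)^{k-1},
\end{align*}\end{linenomath}
which is the desired inequality.

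I do not expect a substantive obstacle: this is a direct generalization of the arguments in Propositions~\ref{prop:6.1} and \ref{prop:6.2}, which handled the particular instances needed to bound $p$-length in Theorems~\ref{thm1:odd} and \ref{thm2:even}. The only point that deserves careful attention is the bookkeeping identifying the $k-1$ interior factors whose Sylow $p$-subgroups are simultaneously nontrivial and non-normal; once those two facts about the upper $p$-series are in hand, the rest is a single application of the product rule in Lemma~\ref{lemma:1}(1).
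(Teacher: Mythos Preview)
Your proposal is correct and is essentially identical to the paper's proof: both construct the upper $p$-series $M_{i+1}/M_i=\mbf{O}_{p',p}(G/M_i)$, observe that for $1\le i\le k-1$ the factor $M_{i+1}/M_i$ has a nontrivial nonnormal Sylow $p$-subgroup (so Theorem~\ref{thm:normalp} gives $d_{p'}(M_{i+1}/M_i)\le 2/(p+1)$), and then multiply these bounds via Lemma~\ref{lemma:1}(1). Your version merely adds the explicit disposal of the trivial case $k\le 1$ and a bit more detail on the bookkeeping, but the argument is the same.
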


\begin{proof}
Once again, let $M_0=1$ and define the characteristic subgroup $M_{i+1}$ of $G$ recursively by $M_{i+1}/M_{i}=\mbf{O}_{p',p}(G/M_{i})$ for $i \ge 0$. Since $G$ is $p$-solvable of $p$-length $k$, the quotient $G/M_k$ is a (possibly trivial) $p'$-group. The quotient $M_{i+1}/M_{i}$ has a nontrivial, nonnormal Sylow $p$-subgroup for $1 \le i \le k-1$. Therefore, $d_{p'}(M_{i+1}/M_{i}) \le 2/(p+1)$ for $1 \le i \le k-1$ by Theorem \ref{thm:normalp}. In the same way as before, we have
\begin{align*}
    d_{p'}(G) \le \prod_{i=1}^{k-1} d_{p'}(M_{i+1}/M_{i}) \le \bigg( \frac{2}{p+1} \bigg)^{k-1}.
\end{align*}
\end{proof}

Next, we classify those groups saturating the bound in Theorems \ref{thm1:odd} and \ref{thm2:even}.

\begin{proposition}\label{prop:6.3}
Let $G$ be a finite group and let $p$ be an odd prime. Assume $G$ is not $p$-solvable. Let $H=G/\mbf{O}_p(G)$ and $Z=\Zen{H}$. Then $d_{p'}(G)=1/(p-1)$ if and only if $p>3$, $H/Z$ is isomorphic to $\psl{2}{p}$ or $\pgl{2}{p}$, and $H' \cong \psl{2}{p}$.
\end{proposition}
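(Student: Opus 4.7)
My plan is to prove both directions. Since $\mbf{O}_p(G)$ is a normal $p$-subgroup, the standard bijection between $p$-regular conjugacy classes of $G$ and those of $G/\mbf{O}_p(G)$ gives $d_{p'}(G) = d_{p'}(H)$, and as $Z = \Zen{H}$ is a central $p'$-subgroup, the same lifting argument yields $d_{p'}(H) = d_{p'}(H/Z)$. For the \emph{if} direction: by Proposition \ref{prop:psolvableodd_simplegroups}, $d_{p'}(\psl{2}{p}) = 1/(p-1)$, while a direct count in $\pgl{2}{p}$ (total $p+2$ classes with the single unipotent class being $p$-singular) yields $d_{p'}(\pgl{2}{p}) = (p+1)/(p^2-1) = 1/(p-1)$. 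Hence whichever of $\psl{2}{p}$ or $\pgl{2}{p}$ is $H/Z$, we obtain $d_{p'}(G) = d_{p'}(H/Z) = 1/(p-1)$.

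For the \emph{only if} direction, assume $d_{p'}(G) = 1/(p-1)$ and work with $H$. Applying Lemma \ref{lemma:1}(1) along a composition series of $H$: each cyclic factor contributes $d_{p'} = 1$, each nonabelian simple $p'$-factor contributes strictly less than $1$, and each nonabelian simple factor divisible by $p$ contributes at most $1/(p-1)$ by Proposition \ref{prop:psolvableodd_simplegroups}; equality forces exactly one nonabelian composition factor isomorphic to $\psl{2}{p}$, with $p > 3$, and all others cyclic of prime order. Set $N = \mbf{O}_{p'}(H)$. Since $H/N$ retains the $\psl{2}{p}$-factor, Theorem \ref{thm:psolvableodd} gives $d_{p'}(H/N) \le 1/(p-1)$, and the chain $1/(p-1) = d_{p'}(H) \le d_{p'}(H/N)\,d_{p'}(N) \le 1/(p-1)\cdot 1$ is tight throughout, producing $d_{p'}(N) = 1$ (so $N$ is abelian) and $d_{p'}(H) = d_{p'}(H/N)$. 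Writing $k_{p'}(H) = \sum_{\bar C} k(\bar C, H)$ over $p'$-classes of $H/N$ and using $k(\bar C, H) \le |N|$, the equality forces $k(N, H) = |N|$ at the trivial class; i.e., $H$-conjugation acts trivially on $N$, so $N \le \Zen{H}$.

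Next I analyze $\bar H := H/N$. Since $\mbf{O}_p(H) = 1$, Schur--Zassenhaus yields $\mbf{O}_p(\bar H) = 1$ (any pullback admits a characteristic $p$-Hall complement sitting inside $\mbf{O}_p(H)$), and $\mbf{O}_{p'}(\bar H) = 1$ by construction. Hence $F^*(\bar H) = E(\bar H)$ with $\Cen{\bar H}{E(\bar H)} \le E(\bar H)$. Any component $L$ has $L/\Zen{L}$ a nonabelian composition factor of $\bar H$, so $L/\Zen{L} \cong \psl{2}{p}$; uniqueness forces a single component $L$, which is therefore $\bar H$-invariant and normal in $\bar H$. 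The case $L \cong \sl{2}{p}$ is excluded via $d_{p'}(\sl{2}{p}) = p/(p^2-1) < 1/(p-1)$, contradicting $d_{p'}(\bar H) \le d_{p'}(L)$, so $L \cong \psl{2}{p}$. The centralizer $\Cen{\bar H}{L}$ has only cyclic composition factors available (else a second $\psl{2}{p}$ would appear), hence is solvable with $\mbf{O}_p$ and $\mbf{O}_{p'}$ characteristic and so trivial, forcing the Fitting subgroup---and therefore $\Cen{\bar H}{L}$ itself---to be trivial. Thus $\bar H$ embeds in $\mr{Aut}(L) = \pgl{2}{p}$ with image containing $L$, giving $\bar H \in \{\psl{2}{p}, \pgl{2}{p}\}$; both being centerless, $\Zen{H} = N$ and $H/\Zen{H}$ is as claimed.

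Finally, $H'/(H' \cap Z) \cong H'Z/Z = (H/Z)' \cong \psl{2}{p}$, and a standard commutator calculation using $Z \le \Zen{H}$ shows $H'$ is perfect; since $p \ge 5$, the only perfect central extensions of $\psl{2}{p}$ are $\psl{2}{p}$ and $\sl{2}{p}$. If $H' \cong \sl{2}{p}$, then $C_2 \cong \Zen{H'} \le N = Z$ and $H = H'N$ is the central product $(\sl{2}{p} \times N)/\Delta$ with $\Delta \cong C_2$ the diagonal. The $\Delta$-orbits on the $p|N|$ $p$-regular classes of $\sl{2}{p} \times N$ all have size $2$ (since $\Delta$ shifts the $N$-coordinate nontrivially), so $k_{p'}(H) = p|N|/2$ and $d_{p'}(H) = p/(p^2-1) < 1/(p-1)$, a contradiction. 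Hence $H' \cong \psl{2}{p}$. The main obstacle is the passage from the equality $d_{p'}(H) = d_{p'}(H/N)$ to the centrality $N \le \Zen{H}$, together with the orbit count for the central product with $\sl{2}{p}$; both rest on careful bookkeeping of how normal subgroups refine the $p$-regular class structure.
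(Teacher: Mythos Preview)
Your approach parallels the paper's: reduce to $\mbf{O}_p(G)=1$, locate a normal abelian subgroup that is forced to be central, identify the quotient as $\psl{2}{p}$ or $\pgl{2}{p}$, and then pin down $H'$. You use $\mbf{O}_{p'}(H)$ and $F^*$-theory where the paper uses the $p$-solvable radical and a minimal-normal-subgroup argument, and your counting that forces $N\le\Zen{H}$ is essentially the paper's step (5), just deployed earlier. For the ``if'' direction, your reduction $d_{p'}(H)=d_{p'}(H/Z)$ requires that each $p'$-class of $H/Z$ lift to exactly $|Z|$ distinct $p'$-classes of $H$, i.e.\ that $H'\cap Z=1$; this does follow from the hypothesis $H'\cong\psl{2}{p}$ (a simple group has trivial centre), but you should make that explicit rather than call it ``the same lifting argument''.

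There is a genuine gap in the final step. The assertion that ``a standard commutator calculation using $Z\le\Zen{H}$ shows $H'$ is perfect'' is valid when $H/Z\cong\psl{2}{p}$, since then $H=H'Z$ and $H'=[H'Z,H'Z]=[H',H']$. But when $H/Z\cong\pgl{2}{p}$ one has $H\ne H'Z$, and the commutator calculation only yields $H'=H''(H'\cap Z)$ together with $H''=(H'')'$; it does \emph{not} by itself give $H'\cap Z\le H''$, so perfection of $H'$ is unproved. The paper sidesteps this by working instead with $N=H^{(\infty)}$, which is perfect by construction: one shows $N/(N\cap Z)\cong\psl{2}{p}$, rules out $N\cong\sl{2}{p}$ via $d_{p'}(\sl{2}{p})=p/(p^2-1)\ne 1/(p-1)$, and then observes that $H/N$ is central-by-cyclic, hence abelian, forcing $H'\le N$ and thus $H'=N\cong\psl{2}{p}$. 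A related slip: in your exclusion of $H'\cong\sl{2}{p}$ you assert $H=H'N$, which again fails for $H/Z\cong\pgl{2}{p}$ (there $[H:H'Z]=2$); but this is harmless, since $d_{p'}(H)\le d_{p'}(H')=p/(p^2-1)<1/(p-1)$ already gives the contradiction, exactly as in your earlier treatment of the component $L$.
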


\begin{proof}
Suppose $G$ is not $p$-solvable and $d_{p'}(G)=1/(p-1)$. We will use the following observation repeatedly.

(1) If $N \nrm M \nrm G$ and $M/N$ is not $p$-solvable, then $d_{p'}(M/N)=1/(p-1)$.

Using Lemma \ref{lemma:1}(1), we have
\begin{linenomath}\begin{align*}
    \frac{1}{p-1} = d_{p'}(G) \le d_{p'}(M/N) \le \frac{1}{p-1},
\end{align*}\end{linenomath}
where the last inequality follows from Theorem \ref{thm:psolvableodd} since $M/N$ is not $p$-solvable. So we must have equality throughout and the claim is proved.

(2) Note that $G/\mbf{O}_p(G)$ is not $p$-solvable, since otherwise $G$ is $p$-solvable. Then $d_{p'}(G/\mbf{O}_p(G))=1/(p-1)$ by (1), so $G/\mbf{O}_p(G)$ satisfies our hypothesis. In what follows, we work in the quotient $G/\mbf{O}_p(G)$ and assume without loss of generality that $\mbf{O}_p(G)=1$.

(3) Let $Z$ be the $p$-solvable radical of $G$; that is, the largest normal $p$-solvable subgroup of $G$. Since $G$ is not $p$-solvable, $G/Z>1$. Using Lemma \ref{lemma:1}(1), we have
\begin{linenomath}\begin{align*}
    \frac{1}{p-1} = d_{p'}(G) \le d_{p'}(Z) \, d_{p'}(G/Z) \le d_{p'}(G/Z) = \frac{1}{p-1},
\end{align*}\end{linenomath}
where the last equality follows from (1). This forces $d_{p'}(Z)=1>2/(p+1)$, so $Z$ has a normal Sylow $p$-subgroup $P$ by Theorem \ref{thm:normalp}. But then $P$ is characteristic in $Z \nrm G$ and consequently $P \le \mbf{O}_p(G)=1$. So $Z$ is a $p'$-subgroup with $1=d_{p'}(Z)=d(Z)$, which means $Z$ is abelian by Gustafson \cite{G73}.

(4) Let $\q{G}=G/Z$ and use the `bar' notation. Let $\q{M}$ be a minimal normal subgroup of $\q{G}$. Then $\q{M} \cong S^n$ for some nonabelian simple group $S$ with order divisible by $p$ and some natural number $n$. By (1), $1/(p-1)=d_{p'}(\q{M}) = d_{p'}(S)^n$. Since $d_{p'}(S) \le 1/(p-1)$ by Theorem \ref{thm:psolvableodd}, we have $n=1$ and $d_{p'}(S) = 1/(p-1)$. Then by Proposition \ref{prop:psolvableodd_simplegroups}, $p>3$ and $S \cong \psl{2}{p}$. Now, $\q{C} = \Cen{\q{G}}{\q{M}} \nrm \q{G}$ and $d_{p'}(\q{C} \,\q{M})=1/(p-1)$ by (1). Since $d_{p'}(\q{M})=1/(p-1)$ by (1) again, $1/(p-1) \le d_{p'}(\q{C}) d_{p'}(\q{M}) \le d_{p'}(\q{M})=1/(p-1)$, which forces $d_{p'}(\q{C})=1$. Thus, $\q{C}$ has a normal Sylow $p$-subgroup by Theorem \ref{thm:normalp}. In particular, $\q{C}$ is $p$-solvable, so $\q{C}=\q{1}$. We have shown that $\q{G}$ is an almost simple group with socle $\psl{2}{p}$, so $\q{G}$ is isomorphic to $\psl{2}{p}$ or $\pgl{2}{p}$. It is well-known that $k_{p'}(\pgl{2}{p})=p+1$, so $d_{p'}(\pgl{2}{p})=1/(p-1)$ and this case can occur.

(5) We claim that $Z$ is a central subgroup of $G$. We showed in (3) that $Z$ is an abelian $p'$-group. By (4), $|\q{G}|_{p'}=\epsilon (p-1)(p+1)$, where $\epsilon = \tfrac 12$ if $\q{G} \cong \psl{2}{p}$ and $\epsilon=1$ if $\q{G} \cong \pgl{2}{p}$. Then
\begin{linenomath}\begin{align*}
    \frac{1}{p-1} = d_{p'}(G) = \frac{k_{p'}(G)}{|Z| \epsilon (p-1)(p+1)},
\end{align*} \end{linenomath}
and it follows that $k_{p'}(G) = |Z| \epsilon (p+1) = k_{p'}(Z) k_{p'}(\q{G})$. Now, if $x_1, \dots, x_{\epsilon(p+1)}$ are elements of $G$ such that the $\q{x}_i$ are representatives of the $p'$-conjugacy classes of $\q{G}$, then every $p'$-element of $G$ lies in a conjugacy class of the form $(x_i z)^G$ with $i=1,\dots, \epsilon(p+1)$ and $z \in Z$. By our formula for $k_{p'}(G)$, all of these classes must be $p'$-classes and they must be distinct. In particular, $z^G=\{z\}$ for all $z \in Z$. That is, $Z \le \Zen{G}$. In fact, $Z = \Zen{G}$ since $\Zen{\q{G}}$ is trivial.

(6) By what we proved in (4) and (5), $Z=\Zen{G}$ and $G/Z$ is isomorphic to $\psl{2}{p}$ or $\pgl{2}{p}$. This means $G'Z/Z = (G/Z)' \cong \psl{2}{p}$. Let $N=G^{(\infty)} \nrm G'$, the last term of the derived series for $G$. Note that $N \cap Z < N$, since otherwise $G$ is solvable. We have $1 < N/N\cap Z \cong NZ/Z \nrm G'Z/Z \cong \psl{2}{p}$. Since $\psl{2}{p}$ is simple for $p>3$, $NZ/Z=G'Z/Z$ and $N/N \cap Z \cong \psl{2}{p}$. That is, $N$ is a perfect central extension of $\psl{2}{p}$. If $N\cap Z>1$, then $N \cong \sl{2}{p}$, the Schur cover of $\psl{2}{p}$ when $p > 3$. But then $N \nrm G$ and
\begin{linenomath}\begin{align*}
    d_{p'}(N) = d_{p'}(\sl{2}{p}) = \frac{p}{(p-1)(p+1)} \neq \frac{1}{p-1},
\end{align*}\end{linenomath}
which contradicts (1). So $N \cap Z = 1$ and $N \cong \psl{2}{p}$. In the quotient $G/N$, $NZ/N \le \Zen{G/N}$ and $[G:NZ]=[G:G'Z] \le 2$ means $(G/N)/(NZ/N)$ is cyclic. Therefore, $G/N$ is abelian and $G' \le N$. The reverse conclusion is obvious, so $G' = N \cong \psl{2}{p}$.  We have proved one direction of the proposition.

For the reverse direction, first note that $\mbf{O}_p(G)$ is contained in the kernel of every irreducible $p$-modular representation \cite[Lemma 2.3]{N98}, so $d_{p'}(G) = d_{p'}(G/\mbf{O}_p(G))$. Thus, we may assume without loss of generality that $\mbf{O}_p(G)=1$. Then $Z=\Zen{G}$ is a $p'$-group and $G' \cap Z=1$. Let $\q{G}=G/Z$ and use the `bar' notation. Let $\q{x} \in \q{G}$ be a representative of one of the $\epsilon(p+1)$ $p$-regular conjugacy classes $\q{x}^{\q{G}}$ of $\q{G}$, where again $\epsilon = \tfrac 12$ if $\q{G} \cong \psl{2}{p}$ and $\epsilon=1$ if $\q{G} \cong \pgl{2}{p}$. The preimage of $\q{x}^{\q{G}}$ as a subset of $G$ is $\bigcup_{z \in Z} (xz)^G$. Note that $(xz)^G$ is a $p$-regular class of $G$ for every $z \in Z$ since $Z$ is a $p'$-group. Suppose that $(xz)^G = (xz')^G$ for some $z,z' \in Z$. Then $x z' = (xz)^g = x^g z$ for some $g \in G$. But then $z' z^{-1} = [x,g] \in G' \cap Z = 1$, so $z=z'$. Therefore, the union is disjoint and every $p$-regular class of the quotient $\q{G}$ lifts to $|Z|$ $p$-regular classes of $G$, so $k_{p'}(G) = |Z| \epsilon (p+1)$ and $d_{p'}(G) = 1/(p-1)$, as claimed. 
\end{proof}

\begin{proposition}\label{prop:6.4}
Let $G$ be a finite group. Assume $G$ is nonsolvable. Let $H=G/\mbf{O}_2(G)$ and $Z=\Zen{H}$. Then $d_{2'}(G)=4/15$ if and only if $H \cong A_5 \times Z$.
\end{proposition}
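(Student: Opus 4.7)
The plan is to mirror the proof of Proposition~\ref{prop:6.3}, with $4/15$ replacing $1/(p-1)$ and $A_5$ replacing $\psl{2}{p}$. For the forward direction, assume $G$ is nonsolvable with $d_{2'}(G)=4/15$. Since $\mbf{O}_2(G)$ lies in the kernel of every irreducible $2$-modular representation of $G$ (see \cite[Lemma 2.3]{N98}), we have $d_{2'}(G)=d_{2'}(G/\mbf{O}_2(G))$ and $G/\mbf{O}_2(G)$ is still nonsolvable, so without loss of generality $\mbf{O}_2(G)=1$ and the target becomes $G \cong A_5 \times \Zen{G}$. Two observations drive the argument. First, if $N \nrm M \nrm G$ with $M/N$ nonsolvable, then Lemma~\ref{lemma:1}(1) gives $4/15=d_{2'}(G) \le d_{2'}(M/N)$, while Theorem~\ref{thm:psolvableeven} gives the reverse inequality, so $d_{2'}(M/N)=4/15$. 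Second, the proof of Theorem~\ref{thm:psolvableeven} shows that a nonabelian finite simple group $S$ satisfies $d_{2'}(S)=4/15$ if and only if $S \cong A_5$.

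Let $R$ be the solvable radical of $G$, and note that $G/R$ is nonsolvable. The product inequality in Lemma~\ref{lemma:1}(1), together with the first observation applied to $G/R$, forces $d_{2'}(R)=1$, so Theorem~\ref{thm:normalp} gives $R$ a normal Sylow $2$-subgroup; this is trivial since it is characteristic in $R \nrm G$ and hence contained in $\mbf{O}_2(G)=1$. Thus $R$ is an abelian $2'$-group by Gustafson~\cite{G73}. Setting $\q{G}:=G/R$, pick a minimal normal subgroup $\q{M} \cong S^n$ of $\q{G}$. The identity $4/15=d_{2'}(\q{M})=d_{2'}(S)^n$ together with $d_{2'}(S) \le 4/15$ force $n=1$ and $S \cong A_5$. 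Applying the same technique to $\Cen{\q{G}}{\q{M}}$ (which decomposes as a direct product with $\q{M}$) yields a centralizer with $d_{2'}=1$, hence abelian and $2'$, hence trivial (being solvable in $\q{G}$, whose solvable radical is trivial). So $\q{G}$ is almost simple with socle $A_5$, meaning $\q{G} \in \{A_5,S_5\}$. A direct count gives $d_{2'}(S_5)=3/15<4/15$, eliminating $S_5$, and so $\q{G} \cong A_5$.

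To identify $R$ with $\Zen{G}$, I would count $2$-regular classes: $k_{2'}(G)=(4/15)\cdot 15|R|=4|R|$, and each of the four $2'$-classes of $\q{G}$ lifts to at most $|R|$ classes of $G$, so equality must hold everywhere. Applied to the class of $\q{1}$, this forces the $|R|$ classes $\{r^G : r \in R\}$ to be distinct singletons, proving $R \le \Zen{G}$; the reverse inclusion is immediate since $\Zen{G}$ is abelian and hence solvable. Finally, let $N=G^{(\infty)}$; since $\q{G} \cong A_5$ is perfect, $NR=G$ and $N/(N\cap R) \cong A_5$, exhibiting $N$ as a perfect central extension of $A_5$ whose kernel $N \cap R$ is a $2'$-group. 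Because the Schur multiplier of $A_5$ is $\mathbb{Z}/2$, its $2'$-part is trivial, forcing $N\cap R=1$ and $N \cong A_5$. Combined with $G=NR$ and $[N,R]=1$ (as $R \le \Zen{G}$), this yields $G = N\times R \cong A_5 \times Z$. The converse reduces by the same quotient device to a direct computation: if $H \cong A_5 \times Z$ with $Z$ an abelian $2'$-group, then $k_{2'}(H)=4|Z|$ and $|H|_{2'}=15|Z|$, so $d_{2'}(G)=d_{2'}(H)=4/15$.

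The main obstacle I anticipate is the perfect central extension step, which needs care: $N$ could a priori be the Schur double cover $\sl{2}{5}$, and ruling this out rests on the fact that $N \cap R \le R$ is a $2'$-group together with the computation that the $2'$-part of $A_5$'s Schur multiplier is trivial. Everything else is careful bookkeeping parallel to Proposition~\ref{prop:6.3}.
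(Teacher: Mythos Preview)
Your proof is correct and follows essentially the same route as the paper, which in fact just sketches the argument by pointing back to Proposition~\ref{prop:6.3} and noting the key facts (the unique simple group with $d_{2'}=4/15$ is $A_5$, $d_{2'}(S_5)=1/5$, and the Schur multiplier of $A_5$ has trivial $2'$-part). The only cosmetic difference is that the paper shortcuts the splitting step by observing directly that a central extension of $A_5$ by a $2'$-group must split, whereas you go through $N=G^{(\infty)}$ as in Proposition~\ref{prop:6.3}; both are fine.
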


\begin{proof}
The proof follows in the same fashion as that of Proposition \ref{prop:6.3}, so we give a sketch of the proof. For both directions, we may assume without loss of generality that $\mbf{O}_2(G)=1$. Then $Z=\Zen{G}$ is a $2'$-group. By the proof of Theorem \ref{thm:psolvableeven}, the only nonabelian finite simple group $S$ satisfying $d_{2'}(S)=4/15$ is $S \cong A_5$, so $G/Z$ is an almost simple group with socle $A_5$. Since $S_5 = \mr{Aut}(A_5)$ and $d_{2'}(S_5)=1/5 \neq 4/15$, we must have $G/Z \cong A_5$. The Schur multiplier of $A_5$ is cyclic of order $2$ and $Z$ is a $2'$-group, so the central extension must split and $G \cong A_5 \times Z$. For the reverse direction, $d_{p'}(G)=d_{p'}(Z \times A_5) = 4/15$.
\end{proof}

Finally, we obtain some information about the $p'$-structure of the groups in Theorem \ref{thm1:odd}.

\begin{proposition}\label{prop:6.5}
Let $G$ be a finite group and let $p$ be an odd prime. Suppose $d_{p'}(G)>1/(p-1)$. Then $G$ is $p$-solvable and the number of nonabelian simple $p'$-factors in a composition series for $G$ is strictly less than $\ln(p-1)/\ln(12)$. In particular, if $p \le 13$, then $G$ is solvable.
\end{proposition}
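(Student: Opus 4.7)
The plan is to combine Theorem \ref{thm:psolvableodd} with a multiplicative bound along a composition series. First, since $p$ is odd and $d_{p'}(G)>1/(p-1)$, Theorem \ref{thm:psolvableodd} directly gives that $G$ is $p$-solvable; this is the easy half of the statement and needs no further work.

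For the bound on nonabelian simple $p'$-factors, I would fix a composition series $1=G_0 \nrm G_1 \nrm \cdots \nrm G_n = G$ with simple factors $F_i = G_{i+1}/G_i$. Iterating Lemma \ref{lemma:1}(1) yields $d_{p'}(G) \le \prod_{i=0}^{n-1} d_{p'}(F_i)$. By the $p$-solvability established above, no $F_i$ is a nonabelian simple group of order divisible by $p$, so each $F_i$ is either cyclic of prime order (in which case $d_{p'}(F_i)=1$, regardless of whether that prime equals $p$) or a nonabelian simple $p'$-group (in which case $d_{p'}(F_i) = k(F_i)/|F_i|$, since every element is $p$-regular).

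The key ingredient is the classical fact that $k(S)/|S| \le 1/12$ for every nonabelian finite simple group $S$, with equality if and only if $S \cong A_5$. Granting this, if $m$ denotes the number of nonabelian simple $p'$-composition factors, the product inequality becomes
\[
\frac{1}{p-1} < d_{p'}(G) \le \prod_{i=0}^{n-1} d_{p'}(F_i) \le \left(\frac{1}{12}\right)^m,
\]
so $12^m < p-1$, i.e.\ $m < \ln(p-1)/\ln(12)$, as required. When $p \le 13$ we have $\ln(p-1)/\ln(12) \le 1$, forcing $m=0$, so all composition factors of $G$ are cyclic of prime order and $G$ is solvable.

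The only real obstacle is supporting the $k(S)/|S| \le 1/12$ claim for nonabelian simple $S$. I would invoke it as a standard consequence of the Classification: the small-order cases listed in Table \ref{table:4primes} and the ATLAS are checked by hand (the maximum is indeed achieved at $A_5$), while for the larger alternating, sporadic and Lie-type families the bound $k(S) \le |S|/12$ is very much weaker than the class-number estimates already exploited in Propositions \ref{prop:alternating}--\ref{prop:liecrossnowitness} and can be read off from those same order formulas. No new machinery beyond what appears earlier in the paper is needed.
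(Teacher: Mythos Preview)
Your proposal is correct and follows essentially the same route as the paper's proof: apply Theorem \ref{thm:psolvableodd}, multiply $d_{p'}$ along a composition series via Lemma \ref{lemma:1}(1), and bound the nonabelian simple $p'$-factors using $d(S)\le 1/12$. The only difference is cosmetic: the paper cites Dixon \cite{D73} directly for the $1/12$ bound rather than rederiving it from earlier propositions.
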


\begin{proof}
Let $\{S_i\}_{i=1}^r$ be the set of composition factors of $G$. Since $G$ is $p$-solvable by Theorem \ref{thm:psolvableodd}, each $S_i$ is either a simple $p'$-group or isomorphic to $C_p$. Let $J \subseteq \{1,\dots,r\}$ be the largest index set such that $S_j$ is a nonabelian simple $p'$-group for all $j \in J$. By applying Lemma \ref{lemma:1}(1) repeatedly to a composition series for $G$, we have
\begin{linenomath}\begin{align*}
    \frac{1}{p-1} &< d_{p'}(G) \le \prod_{i=1}^r d_{p'}(S_i) \le \prod_{j \in J} d_{p'}(S_j) \le \prod_{j \in J} d(S_j),
\end{align*}\end{linenomath}
where $d_{p'}(S_j)=d(S_j)$ for all $j\in J$ since the $S_j$ are $p'$-groups. Now, because the $S_j$ are nonabelian simple groups, $d(S_j) \le 1/12$ for all $j \in J$ by a result of Dixon \cite{D73}. Therefore,
\begin{linenomath}\begin{align*}
    \frac{1}{p-1} < \bigg( \frac{1}{12} \bigg)^{|J|}.
\end{align*}\end{linenomath}
Rearranging this inequality yields $|J| < \ln(p-1)/\ln(12)$, as claimed. 
\end{proof}

We have proved Theorem \ref{thm1:odd}: Part (1) is Theorem \ref{thm:psolvableodd}, Proposition \ref{prop:6.1} and Proposition \ref{prop:6.5}. Part (2) is Proposition \ref{prop:6.3}. We have also proved Theorem \ref{thm2:even}: Part (1) is Theorem \ref{thm:psolvableeven} and Proposition \ref{prop:6.2}. Part (2) is Proposition \ref{prop:6.4}.

The results in this section make clear that Theorems \ref{thm1:odd} and \ref{thm2:even} are intimately related to Theorem \ref{thm:normalp}. It would be of interest to study the finite groups $G$ satisfying $d_{p'}(G) \in (1/(p-1),2/(p+1)]$ for an odd prime $p$. As a first step in this direction, we note that for every $p>3$ there exists a finite group $G$ such that $d_{p'}(G)$ lands in this interval (as long as some number theoretical requirements hold). Fixing $p>3$, for every prime $r$ and integer $a$ such that $p\mid (r^a-1)$ and $p+1 \le r^a < (p-1)^2$, there exists a group $G$ of order $r^a p$ such that $1/(p-1) < d_{p'}(G) \le 2/(p+1)$. The number theoretical requirement for the existence of such a prime power $r^a$ is equivalent to the existence of an integer $k$ with $1 \le k < p-2$ such that $pk+1$ is a prime power. It is expected that such a $k$ exists for every odd prime $p>3$, but it has not been proved.

Finally, we conclude this section by showing that the theorem of Sangroniz we mentioned in the introduction cannot be generalized to arbitrary finite groups. These examples were provided to us by G. Navarro. Take $G=2.A_5.2$, $H$ a maximal subgroup isomorphic to $C_5:C_8$, and $p=2$. Then the three irreducible 2-Brauer characters of $G$ restrict irreducibly to $H$, but $d_{2'}(G)=1/5 \neq 2/5 = d_{2'}(H)$. For the other direction, take $G=A_9$, $H=A_8$ and $p=3$. Then $d_{3'}(G)=d_{3'}(H)$ but not every irreducible 3-Brauer character of $G$ restricts irreducibly to $H$. For example, $G$ has an irreducible 3-Brauer character of degree 189, but $H$ does not.

\section{GAP Calculations}\label{sec:calculations}

\subsection{GAP Code}
This section contains GAP functions for calculating $d_2(G)$ and $d_{2'}(G)$ for a nonabelian finite simple group $G$. The function \verb|d2G(G)| returns $d_2(G)$, $1/(\widetilde{p}-1)$ and the results of the logical test $d_2(G) \le 1/(\widetilde{p}-1)$.

\begin{lstlisting}[language=Python]
d2G:=function(G)
    local classorders,num2class,2part,primesinG,plarge,d2,bound;
    classorders:=List(List(ConjugacyClasses(G),Representative),Order); # Orders of class representatives.
    num2class:=Size(Filtered(Filtered(classorders,IsPrimePowerInt),IsEvenInt))+1; # Number of 2-classes
    2part:=2.^PrimePowersInt(Size(G))[2]; # Requires |G| even
    primesinG:=PrimeDivisors(Size(G));
    plarge:=Maximum(primesinG);
    d2:=num2class/2part;
    bound:=1./(plarge-1);
    return [d2,bound,d2 <= bound];
end;
\end{lstlisting}
The function \verb|d2prime| returns $d_{2'}(G)$.
\begin{lstlisting}
d2prime:=function(G)
    local classorders,numoddclass,SizeG,2part,oddpart;
    classorders:=List(List(ConjugacyClasses(G),Representative),Order); # Orders of class representatives.
    numoddclass:=Size(Filtered(classorders,IsOddInt));
    SizeG:=Size(G);
    2part:=2^PrimePowersInt(SizeG)[2]; # Requires |G| even
    oddpart:=SizeG/2part;
    return numoddclass/oddpart;
end;
\end{lstlisting}

Here are some examples of these functions in action. 

\begin{lstlisting}
d2G(AlternatingGroup(9));
[ 0.078125, 0.166667, true ]
d2G(SimpleGroup("Suz"));
[ 0.0012207, 0.0833333, true ]
# Examples from Lemma 2.2.
Lemma2Examples:=[PSL(2,5),PSL(2,9),PSU(4,2),PSL(3,2),PSL(2,8),PSU(3,3),PSL(3,3),PSL(2,17)];;
List(Lemma2Examples,d2prime);
[ 4/15, 1/9, 8/405, 4/21, 8/63, 5/189, 7/351, 7/153 ]
\end{lstlisting}

\subsection{Calculations for small alternating groups.} 
In this section, we show that $d_{p'}(A_n) \le 1/(p-1)$ for $5 \le n <10$, with equality if and only if $p=5=n$. For $n=8,9$, we compute using Remark \ref{remark:2}.

\tiny
{\renewcommand{\arraystretch}{1.2}
\begin{table}[ht]
\centering
    \begin{tabular}{c|c|c|c|c}
\hline
$n$ 
& $k_2(A_n)$ 
& $|A_n|_2$ 
& $\widetilde{p}$ 
& $d_2(G) < 1/(\widetilde{p}-1)$ \\ \hline
9 & 5 & 64 & 7 & 0.07813 $<$ 0.16667 \checkmark \\
8 & 5 & 64 & 7 & 0.07813 $<$ 0.16667 \checkmark \\ \hline
\end{tabular}
\end{table}
\normalsize

We check the remaining cases by considering each odd prime $p$ dividing the order. 

\tiny
{\renewcommand{\arraystretch}{1.2}
\begin{longtable}{c|c|c|c|c}
\hline
$n$ 
& $p$ 
& $k_{p'}(A_n)$ 
& $|A_n|_{p'}$ 
& $d_{p'}(A_n) \le 1/(p-1)$ \\ \hline

7 & 7 & 7 & 360 & 0.01944 $<$ 0.16667 \checkmark \\
  & 5 & 8 & 504 & 0.01587 $<$ 0.25000 \checkmark\\
  & 3 & 6 & 280 & 0.02143 $<$ 0.50000 \checkmark  \\
6 & 5 & 5 & 72 & 0.06944 $<$ 0.25000 \checkmark \\
  & 3 & 5 & 40 & 0.12500 $<$ 0.50000 \checkmark\\
5 & 5 & 3 & 12 & 0.25000 = 0.25000 \checkmark  \\
  & 3 & 4 & 20 & 0.20000 $<$ 0.50000 \checkmark \\ \hline
\end{longtable}
\normalsize

\subsection{Calculations for sporadic groups and the Tits group.} 
In this section, we prove Proposition \ref{prop:sporadic}. When $|G|$ is small, we calculate using our GAP code and the computational strategy described in Remark \ref{remark:2}. When $|G|$ is large, we can reason with the even coarser estimate $d_2(G) < k(G)/|G|_2 < 1/(\widetilde{p}-1)$. In some cases, we extract the classes from the ATLAS \cite{ATLAS} using a GAP command like \verb|AtlasClassNames(CharacterTable("J3"));| and calculate by hand. 

\tiny
{\renewcommand{\arraystretch}{1.2}
\begin{longtable}{c|c|c|c|c}
\hline
$G$
& $\#$ of conjugacy classes 
& $|G|_2$ 
& $\widetilde{p}$ 
& Compare $k(G)/|G|_2$ or $k_2(G)/|G|_2$ to $1/(\widetilde{p}-1)$ \\ \hline

$M_{11}$ & $k_2(G)=5$  & $2^4$ & 11 & 0.31250 $\not< 0.10000$ \quad \texttimes \\
$M_{12}$ & $k_2(G)=7$  & $2^6$ & 11 &  0.10938 $\not<$ 0.10000 \quad \texttimes \\
$M_{22}$ & $k_2(G)=5$  & $2^7$ & 11 & 0.03906 $<$ 0.10000 \quad \checkmark \\
$M_{23}$ & $k_2(G)=4$  & $2^7$ & 23 & 0.03125 $<$ 0.04545 \quad \checkmark \\
$M_{24}$ & $k_2(G)=7$  & $2^{10}$ & 23 & 0.00684 $<$ 0.04545 \quad \checkmark \\
${}^2 F_4(2)'$ & $k(G)=22$  & $2^{11}$ & 13 & $0.01074 < 0.08333$ \quad \checkmark \\
$J_1$ & $k_2(G)=2$  & $2^3$ & 19 & 0.25000 $\not<$ 0.05556 \quad \texttimes \\
$J_2$ & $k_2(G)=5$  & $2^7$ & 7 & 0.03906 $<$ 0.16667 \quad \checkmark \\
$J_3$ & $k_2(G)=4$  & $2^7$ & 19 & 0.03125 $<$ 0.05556 \quad \checkmark \\
$J_4$ & $k(G)=62$  & $2^{21}$ & 43 & $0.00003 < 0.02381$ \quad \checkmark \\
$Co_1$ & $k(G)=101$  & $2^{21}$ & 23 & $0.00005 <$ 0.04545 \quad \checkmark \\
$Co_2$ & $k(G)=60$  & $2^{18}$ & 23 & $0.00023<$ 0.04545 \quad \checkmark \\
$Co_3$ & $k(G)=42$  & $2^{10}$ & 23 & $0.04102<$ 0.04545 \quad \checkmark \\
$Fi_{22}$ & $k(G)=65$  & $2^{17}$ & 13 & $0.00050<$ 0.08333 \quad \checkmark \\
$Fi_{23}$ & $k(G)=98$  & $2^{18}$ & 23 & $0.00037<$ 0.04545 \quad \checkmark \\
$Fi_{24}'$ & $k(G)=108$  & $2^{21}$ & 29 & $0.00005<$ 0.03571 \quad \checkmark \\
$HS$ & $k_2(G)=9$  & $2^9$ & 11 & 0.01758 $<$ 0.10000 \quad \checkmark \\
$McL$ & $k_2(G)=4$  & $2^7$ & 11 & 0.03125 $<$ 0.10000 \quad \checkmark \\
$He$ & $k_2(G)=7$  & $2^{10}$ & 17 & 0.00684 $<$ 0.06250 \quad \checkmark \\
$Ru$ & $k(G)=36$  & $2^{14}$ & 29 & 0.00220 $<$ 0.03571 \quad \checkmark \\
$Suz$ & $k_2(G)=10$  & $2^{13}$ & 13 & 0.00122 $<$ 0.08333 \quad \checkmark \\
$O'N$ & $k(G)=30$  & $2^9$ & 31 & 0.05859 $<$ 0.03333 \quad \checkmark \\
$HN$ & $k(G)=54$  & $2^{14}$ & 19 & $0.00330<$ 0.05556 \quad \checkmark \\
$Ly$ & $k_2(G)=5$  & $2^8$ & 67 & 0.01953 $\not<$ 0.01515 \quad \texttimes \\
$Th$ & $k(G)=48$ & $2^{15}$ & 31 & $0.00146<$ 0.03333 \quad \checkmark \\
$B$ & $k(G)=184$   & $2^{41}$ & 47 & $\approx 10^{-10}<$ 0.02174 \quad \checkmark \\
$M$ & $k(G)=194$  & $2^{46}$ & 71 & $\approx 10^{-12}<$ 0.01429 \quad \checkmark \\ \hline
\end{longtable}
\normalsize

To finish, we show that if $G$ is isomorphic to one of $M_{11}$, $M_{12}$, $J_1$, or $Ly$, then $d_{p'}(G) < 1/(p-1)$ for every odd prime $p$ dividing $G$.

\tiny
{\renewcommand{\arraystretch}{1.2}
\begin{longtable}{c|c|c|c|c}
\hline
$G$ 
& $p$ 
& $k_{p'}(G)$
& $|G|_{p'}$  
& $d_{p'}(G) < 1/(p-1)$ \\ \hline

$M_{11}$ & 3 & $8$ & $880$ & 0.00909 $<$ 0.50000 \checkmark  \\

& 5 & $9$ & $1584$ & 0.00568 $<$ 0.25000 \checkmark \\

& 11 & $8$ & $720$ & 0.01111 $<$ 0.10000 \checkmark \\[1em]

$M_{12}$ & 3 & $11$ & $3520$ & 0.00313 $<$ 0.50000 \checkmark \\
& 5 & $13$ & $19008$ & 0.00068 $<$ 0.25000 \checkmark\\
& 11 & $13$ & $8640$ & 0.00150 $<$ 0.10000 \checkmark \\[1em]

$J_1$ & 3 & $11$ & $58520$ & 0.00019 $<$ 0.50000 \checkmark \\
& 5 & $9$ & $35112$ & 0.00026 $<$ 0.25000 \checkmark \\
& 7 & $14$ & $25080$ & 0.00056 $<$ 0.16667 \checkmark \\
& 11 & $14$ & $15960$ & 0.00088 $<$ 0.10000 \checkmark \\
& 19 & $12$ & $9240$ & 0.00130 $<$ 0.05556 \checkmark \\[1em]

$Ly$ & 3 & $30$ & $\approx 10^{13}$ & $\approx 10^{-12}$ $<$ 0.50000 \checkmark  \\
& 5 & $40$ & $\approx 10^{12}$ & $\approx 10^{-11}$ $<$ 0.25000 \checkmark \\
& 7 & $46$ & $\approx 10^{16}$ & $\approx 10^{-15}$ $<$ 0.16667 \checkmark \\
& 11 & $47$ & $\approx 10^{15}$ & $\approx 10^{-14}$ $<$ 0.10000 \checkmark \\
& 31 & $48$ & $\approx 10^{15}$ & $\approx 10^{-14}$ $<$ 0.03333 \checkmark \\
& 37 & $51$ & $\approx  10^{15}$ & $\approx 10^{-13}$ $<$ 0.02778 \checkmark \\
& 67 & $50$ & $\approx 10^{15}$ & $\approx 10^{-13}$ $<$ 0.01515 \checkmark \\ \hline
\end{longtable}
\normalsize

\, \\
\noindent \textbf{Acknowledgements.} The author is grateful to Professor Hung Tong-Viet for supervising this project as part of his PhD studies. He thanks the referees for their careful reading of the paper and their comments/suggestions, as well as Attila Mar{\'o}ti and Nguyen N. Hung for reading a previous version. He also thanks Thomas Keller for asking about Theorem \ref{thm4:plength}.


\bigskip \footnotesize

\textsc{Department of Mathematics and Statistics, Binghamton University,
    Binghamton, NY 13902-6000, USA}\par\nopagebreak
  \textit{E-mail address}: \texttt{schroedc@math.binghamton.edu}
  
\end{document}